\definecolor{mygray}{gray}{0.85}
\renewcommand{\leq}{\leqslant}
\renewcommand{\geq}{\geqslant}
\renewcommand{\trianglelefteq}{\trianglelefteqslant}
\newcommand{\mrm}[1]{\mathrm{#1}}
\theoremstyle{plain}
\def\subsection{\@startsection{subsection}{3}%
  \z@{.5\linespacing\@plus.7\linespacing}{.3\linespacing}%
  {\bfseries\centering}}
\def\subsubsection{\@startsection{subsubsection}{3}%
  \z@{.5\linespacing\@plus.7\linespacing}{.3\linespacing}%
  {\centering}}
\def\myfnt{\ifx\protect\@typeset@protect\expandafter\footnote\else\expandafter\@gobble\fi}
\newtheorem{theorem}{Theorem}[section]
\newtheorem{corollary}[theorem]{Corollary}
\newtheorem{definition}[theorem]{Definition}
\newtheorem{lemma}[theorem]{Lemma}
\newtheorem{proposition}[theorem]{Proposition}
\newtheorem{hypothesis}[theorem]{Hypothesis}
\newtheorem{remark}[theorem]{Remark}
\newtheorem{notation}[theorem]{Notation}
\newcounter{claimcounter}
\begin{document}
%%%%%%%%%%%%%%%%%%

\begin{abstract} In \cite{coarse} it was asked if equality on the reals is sharp as a lower bound for the complexity of topological isomorphism between oligomorphic groups.
We prove that under the assumption of weak elimination of imaginaries this is indeed the case. Our methods are model theoretic and they also have applications on the classical problem of reconstruction of isomorphisms of permutation groups from (topological) isomorphisms of automorphisms groups. As a concrete application, we give an explicit description of $\mrm{Aut}(\mrm{GL}(V))$ for any vector space $V$ of dimension $\aleph_0$ over a finite field, in affinity with the  classical description for finite dimensional \mbox{spaces due to Schreier and van der Waerden.}
\end{abstract}

\title[The Isomorphism Problem for Oligomorphic Groups with WEI]{The Isomorphism Problem for Oligomorphic Groups with Weak Elimination of Imaginaries}

\subjclass[2020]{03E15, 20B27, 22A05, 20F28}

\thanks{The author was partially supported by project PRIN 2022 ``Models, sets and classifications", prot. 2022TECZJA. The author wishes to thank the group GNSAGA of the “Istituto
Nazionale di Alta Matematica “Francesco Severi”” (INDAM) to which he belongs. We thank D. Macpherson for interesting discussions regarding weak elimination of imaginaries. We thank P. Boldrini, D. Carolillo and F. Pisciotta for useful comments in earlier drafts of this paper.}

\author{Gianluca Paolini}
\address{Department of Mathematics ``Giuseppe Peano'', University of Torino, Italy.}
\email{gianluca.paolini@unito.it}

%\author{Saharon Shelah}
%\address{Einstein Institute of Mathematics,  The Hebrew University of Jerusalem, Israel \and Department of Mathematics,  Rutgers University, U.S.A.}

\date{\today}
\maketitle

%\tableofcontents

\section{Introduction}

%	The reconstruction of model theoretic properties of $\omega$-categorical structures from . In recent here this topic has seen a substantial 

	This paper sits at the intersection of model theory and descriptive set theory. Its purpose is to contribute to the determination of the complexity of a certain classification problem, in the sense of Borel reducibility. The  problem that we are interested in is the problem of classification of certain topological groups, known as oligomorphic groups, up to topological isomorphism, i.e., group isomorphisms that are also homeomorphisms. We now briefly introduce the basics of Borel reducibility and of oligomorphic groups, before discussing our results.
	
	Invariant descriptive set theory (see e.g. \cite{gao}) relies on the following crucial notion: given two Borel spaces $X$ and $Y$ and two equivalence relations $E_X$ and $E_Y$ on $X$ and $Y$, respectively, we say that $E_X$ is Borel reducible to $E_Y$ if there is a Borel map $f$ from $X$ to $Y$ such that for every $x_1, x_2 \in X$ we have that $x_1 E_X x_2$ if and only if $f(x_1) E_Y f(x_2)$. Now, if one has such a reduction, then one says that the equivalence relation $E_X$ is simpler than the equivalence relation $E_Y$. In particular, given two classification problems, if we are able to associate to  them corresponding Borel spaces, then we  are able to compare the two classification problems. Now, one might wonder: how does all this relate to actual classification problems in mathematics? It so happens that many mainstream classification problems can be ``coded'' via objects of an appropriate Baire space. Eminent examples of this phenomenon are for example the following three cases of classification problems (which are by now all completely understood, with respect to Borel reducibility): classification of ergodic measure preserving transformations \cite{foreman}, classification of separable $C^*$-algebras \cite{sabok},  classification of countable \mbox{torsion-free abelian groups up to isomorphism \cite{1205}.} 
	
	Now, if one identifies two equivalence relations if they are mutually reducible to each other (Borel bi-reducibility), then one can order equivalence relations in a partial order under Borel reducibility. In our paper we will actually only be interested in the bottom part of this partial order. At the bottom of this order there are {\em the smooth equivalence relations}, i.e., the equivalence relations of equality on a Borel space. As a Borel space can only be either finite, or of size $\aleph_0$, or of size $2^{\aleph_0}$, the smooth equivalence relations are often denoted by $=_n$, $=_{\mathbb{N}}$ and $=_{\mathbb{R}}$. Thus, an equivalence relation on a Borel space is said to be smooth if it is Borel reducible to one of these equivalence relations. In particular, if an equivalence relation has more than countably many equivalence classes, then saying that it is smooth is equivalent to saying that it is bi-reducible to $=_{\mathbb{R}}$. By a well-known dichotomy result due to Harrington-Kechris-Louveau \cite{E0} there is a simplest equivalence relation (up to Borel bi-reducibility) which is strictly above $=_{\mathbb{R}}$; this equivalence relation is denoted by $E_0$ and is defined, for $\eta, \theta \in \omega^\omega$, by letting $\eta E_0 \theta$ if and only if $\eta$ and $\theta$ are eventually equal. Another important equivalence relation (or rather equivalence relation up to Borel bi-reducibility) is $E_\infty$. $E_0$ has the property of being universal among countable Borel equivalence relations, in the sense that it is Borel-above any equivalence relation $E$ which is Borel and such that each $E$-equivalence class is countable. The existence of such an equivalence relation $E_\infty$ is due to Dougherty-Jackson-Kechris \cite{Einf}. $E_\infty$ has many concrete manifestations; one of them is for example the relation of isomorphism between finitely generated groups \cite{veli}.
	
	We now move to the introduction of oligomorphic groups. These groups are exactly the automorphism groups of countable $\omega$-categorical structures, i.e., those structures $M$ such that $M$ is the only countable model of its first-order theory up to isomorphism; where the topology associated to these groups is defined using as basic open subgroups the pointwise stabilizers of finite sets of $M$. This topology is known as a non-archimedean Polish group topology. Any such group can be topologically embedded in the infinite symmetric group $\mrm{Sym}(\omega)$, and from this perspective, the oligomorphic groups are exactly the ones that have only finitely many orbits on $\omega^n$, hence the name {\em oligomorphic}. Oligomorphic groups and $\omega$-categorical structures are objects of intense study in logic. One of the most interesting aspects of this area is that there are many correspondences between model-theoretic properties of $\omega$-categorical structures $M$ and group theoretic properties of $\mrm{Aut}(M)$. One of the most important results in this direction is the well-known Coquand-Ahlbrandt-Ziegler Theorem \cite{biinte} establishing that topological isomorphism between automorphism groups of countable $\omega$-categorical structures corresponds exactly to bi-interpretability of the underlying structures. Another important result is due to Rubin \cite{rubin}, which, under the additional assumption of admissibility of $\forall \exists$-interpretation and of no algebraicity, achieves a stronger result, i.e., $\mrm{Aut}(M)$ and $\mrm{Aut}(M)$ are isomorphic if and only if $M$ and $N$ are bi-definable. A result analogous to Rubin's was proved by S. Shelah and the author \cite{ssip_reconstruction} under the assumption of strong small index property (see below concerning this property).  In the context of $\omega$-categorical structures the notion of algebraicity is as in Galois theory: $a \in M$ is said to be algebraic over  a subset $A$ of $M$, if the orbit of $a$ under the pointwise stabilizer of $A$ is finite.

\medskip 

	We now move to the problem which is the object of interest of our paper. In \cite{nies, coarse} the authors start a systematic analysis of the complexity of topological isomorphism between oligomorphic groups. The main result of \cite{coarse} is that the relation of topological isomorphism between oligomorphic groups  is reducible to the already mentioned universal countable Borel equivalence relation $E_\infty$. Nonetheless, practically nothing is known on lower bounds for the complexity of this relation, in fact the only thing which is known is that equality on the reals is reducible to this relation, which is simply another way of saying that there are continuum many isomorphism types of topological groups of $\omega$-categorical structures, which has been known at least since 1994 (cf. \cite{henson_graphs, rubin}). The main problem is of course if we can reduce $E_0$ to topological isomorphism on oligomorphic groups. In this paper we prove that under the model-theoretic assumption of {\em weak elimination of imaginaries} this trivial lower bound $=_{\mathbb{R}}$ is actually sharp, as so groups having weak elimination of imaginaries cannot be used to try to reduce $E_0$.

	\begin{theorem}\label{main_theorem} The topological isomorphism relation between automorphism groups of countable $\omega$-categorical structures with weak elimination of imaginaries is smooth.
\end{theorem}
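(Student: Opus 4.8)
The plan is to combine the Coquand--Ahlbrandt--Ziegler correspondence with a reconstruction analysis exploiting the tameness of imaginaries supplied by WEI, and to package the outcome as a Borel complete invariant. By \cite{biinte}, for countable $\omega$-categorical $M,N$ the groups $\mrm{Aut}(M)$ and $\mrm{Aut}(N)$ are topologically isomorphic if and only if $M$ and $N$ are bi-interpretable, so it suffices to prove that bi-interpretability is smooth on the class $\mathcal{K}$ of countable $\omega$-categorical structures with WEI. Recall that smoothness follows as soon as one exhibits a Borel map into a standard Borel space which is constant exactly on equivalence classes. The assignment $M\mapsto\mrm{Th}(M)$ is such a map for the \emph{finer} relation of isomorphism, but it is too fine for bi-interpretability; the task is to replace it, in a Borel fashion, by a coarser invariant that forgets the choice of sorts used in an interpretation while still separating distinct bi-interpretability types. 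This is genuinely delicate: by \cite{coarse} bi-interpretability on oligomorphic groups is a countable Borel equivalence relation, and such relations need not be smooth, $E_0$ being the obstruction of record.

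I would first record the soft half of the picture, valid for all $\omega$-categorical $M,N$: bi-interpretability of $M$ and $N$ is equivalent to $\mrm{Aut}(M)\cong\mrm{Aut}(N)$, and this in turn to isomorphism of the structures of imaginaries $M^{\mrm{eq}}$ and $N^{\mrm{eq}}$ --- the latter because $M^{\mrm{eq}}$ is recovered from $\mrm{Aut}(M)$ as a topological group, via its lattice of open subgroups and the invariant relations among the corresponding coset spaces, while conversely an isomorphism $M^{\mrm{eq}}\cong N^{\mrm{eq}}$ induces a topological isomorphism of automorphism groups. Fixing once and for all a countable many-sorted language adequate for every $M^{\mrm{eq}}$ with $M\in\mathcal{K}$, the map $M\mapsto\mrm{Th}(M^{\mrm{eq}})$ into the Polish space of complete theories of that language is then Borel, and by the soft half it is constant on bi-interpretability classes. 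The crux is the converse \emph{under WEI}: if $M,N\in\mathcal{K}$ and $M^{\mrm{eq}}\equiv N^{\mrm{eq}}$ then $M^{\mrm{eq}}\cong N^{\mrm{eq}}$. Here the characterisation of WEI is the essential input --- every open subgroup of $\mrm{Aut}(M)$ lies between the pointwise and the setwise stabiliser of a finite subset of $M$, equivalently every imaginary element is interdefinable with a finite set of finite real tuples. Because each imaginary is controlled by only finitely many reals, $M^{\mrm{eq}}$ is rigid enough to be determined up to isomorphism by its first-order theory within the class $\{N^{\mrm{eq}}:N\in\mathcal{K}\}$: one runs a back-and-forth between $M^{\mrm{eq}}$ and $N^{\mrm{eq}}$ sort by sort, at each stage extending the partial isomorphism over a fresh imaginary by matching it with one of the finitely many candidates that WEI permits. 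The same sort-by-sort analysis, carried out for a single group, shows that a topological automorphism of $\mrm{Aut}(M)$ is, up to the finite ambiguity dictated by WEI, induced by a definable permutation of the home set, which is the mechanism behind the reconstruction applications; for $M=V$ an $\aleph_0$-dimensional vector space over a finite field, it is precisely the fact that projective points are coded by finite sets of vectors that confines $\mrm{Aut}(\mrm{GL}(V))$ to the list predicted by Schreier and van der Waerden.

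Granting the crux, the proof concludes: for $M,N\in\mathcal{K}$, $M$ and $N$ are bi-interpretable iff $\mrm{Th}(M^{\mrm{eq}})=\mrm{Th}(N^{\mrm{eq}})$, so $M\mapsto\mrm{Th}(M^{\mrm{eq}})$ is a Borel complete invariant and topological isomorphism on $\{\mrm{Aut}(M):M\in\mathcal{K}\}$ is smooth. The main obstacle is exactly the crux: one must verify that WEI is strong enough to upgrade first-order equivalence of the structures of imaginaries to isomorphism, i.e.\ that the correspondence between the imaginary sorts of $M$ and of $N$ extracted from an elementary equivalence (or from a group isomorphism) is forced to be finite-to-finite, never genuinely infinitary. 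It is this finiteness --- which fails in general, its failure being what leaves open a possible reduction of $E_0$ without WEI --- that keeps the back-and-forth going and pins the complexity of the problem down to $=_{\mathbb{R}}$.
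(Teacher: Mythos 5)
Your proposed invariant $M\mapsto\mrm{Th}(M^{\mrm{eq}})$ does not work as stated, and the gap is not in the ``crux'' you flag but one step earlier: the soft half is false. Bi-interpretability of $\omega$-categorical $M$ and $N$ (equivalently, $\mrm{Aut}(M)\cong_{\mrm{top}}\mrm{Aut}(N)$, by Coquand--Ahlbrandt--Ziegler) does \emph{not} imply that $\mrm{Th}(M^{\mrm{eq}})=\mrm{Th}(N^{\mrm{eq}})$ once you have frozen a common many-sorted language. The sorts and relation symbols of $M^{\mrm{eq}}$ are indexed by the $\emptyset$-definable equivalence relations and formulas \emph{of the particular presentation} $M$; a topological isomorphism $\alpha:\mrm{Aut}(M)\to\mrm{Aut}(N)$ induces a bijection between open subgroups and hence between the sorts of $M^{\mrm{eq}}$ and those of $N^{\mrm{eq}}$, but there is no reason this bijection respects the fixed enumeration of the ambient language. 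Concretely, $M$ could be a graph and $N$ a relational structure in a disjoint signature bi-interpretable with $M$; then $\mrm{Th}(M^{\mrm{eq}})$ and $\mrm{Th}(N^{\mrm{eq}})$ are theories of structures whose labelled sorts simply do not match, so equality of theories fails even though the groups are isomorphic. In other words, $\mrm{Th}(\cdot^{\mrm{eq}})$ in a fixed language separates strictly more than bi-interpretability types, so the map is not constant on $\cong_{\mrm{top}}$-classes and cannot be a complete invariant. Your ``fix once and for all a countable many-sorted language adequate for every $M^{\mrm{eq}}$'' is exactly where the argument breaks: there is no canonical, $\cong_{\mrm{top}}$-invariant way to do this, and producing one is the whole content of the theorem.

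This is precisely what WEI buys in the paper, and what your proof does not use WEI for. Under Hypothesis~\ref{hyp}, Lemma~\ref{char_subgr_small_index} shows the open subgroups of $G=\mrm{Aut}(M)$ are exactly the generalized pointwise stabilizers $G_{(K,L)}$; Propositions~\ref{normality_prop} and~\ref{char_point_stab} then pick out the pointwise stabilizers $G_{(K)}$ among them by a purely group-theoretic condition, and Proposition~\ref{char_point_stab_singletons} stratifies them by depth $k$. This produces a structure $\mathcal{E}^{\mrm{ex}}_M(k)$ on a \emph{canonical} set (cosets of stabilizers of Galois-algebraically closed sets at depth $\leq k$, with orbit relations and the domain/codomain predicates) whose isomorphism type is genuinely an invariant of the topological group, not of the chosen presentation of $M$. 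Theorem~\ref{the_first_crucial_lemma} then shows $G\cong_{\mrm{top}}H$ iff $\mathcal{E}^{\mrm{ex}}_M(k)\cong\mathcal{E}^{\mrm{ex}}_N(k)$ for $k\geq\max\{k_M,k_N\}$, Lemma~\ref{preserve_categoricity} shows these auxiliary structures are $\omega$-categorical, and passing to the sequence of theories $(\mrm{Th}(\mathcal{E}^{\mrm{ex}}_{M_G}(k)):k<\omega)$ yields the Borel reduction to $=_{\mathbb R}$. So WEI is used to \emph{canonicalize}, not merely to make a back-and-forth terminate. Relatedly, your claim that under WEI every topological automorphism of $\mrm{Aut}(M)$ is ``induced by a definable permutation of the home set'' is too strong: the paper's own Corollary~\ref{rationals} gives $\mrm{Out}(\mrm{Aut}(\mathbb{Q}))\cong\mathbb{Z}_2$ even though $\mathbb{Q}$ has WEI, so outer automorphisms need not come from permutations preserving the structure on $M$; they come from automorphisms of the orbital structure $\mathcal{E}_M$, which is strictly weaker.
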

	
  Notice that in order for Theorem~\ref{main_theorem} to make sense we need  that automorphism groups of countable $\omega$-categorical structures with weak elimination of imaginaries form a Borel subset of the Borel space of oligomorphic groups, this is shown in \ref{prop_Borel}.
  
  Clearly, in order to argue in favor of the relevance of Theorem~\ref{main_theorem} to the open question from \cite{coarse} mentioned above, one has to argue about the significance of the assumption of weak elimination of imaginaries in the context of countable $\omega$-categorical structures. A first-order structure $M$ is said to have weak elimination of imaginaries if for any formula $\theta(\bar{x}, \bar{y})$ and $\bar{a} \in M^{\mrm{lg}(\bar{y})}$ there is a smallest finite algebraically closed set $A \subseteq M$ such that $\theta(\bar{x}, \bar{a})$ is equivalent to a formula with parameters in $A$ (cf. \cite[pg. 321]{poizat} and \cite[pg. 161]{hodges}). In the case of countable $\omega$-categorical structures this condition can be phrased in the language of permutation groups simply as: for every finite algebraically closed sets $A, B \subseteq M$, $G_{(A \cap B)} = \langle G_{(A)} \cup G_{(B)} \rangle_G$, where $G = \mrm{Aut}(M)$ and $G_{(C)}$ denotes the pointwise stabilizer of $C \subseteq M$. This latter condition in turn can be phrased as: for every $H \leq G = \mrm{Aut}(M)$ which contains the pointwise stabilizer of a finite set, there is a unique smallest finite algebraically closed set $K \subseteq M$ such that the following holds:
$$G_{(K)} \leq H \leq G_{\{K\}}.$$
This is the equivalent definition of weak elimination of imaginaries that we use in our paper (see e.g. \cite[Lemma~1.3]{evans_finite_cover} for a proof of these equivalences). 
%Of course, substituting first-order algebraicity with Galois-algebraicity, this makes sense outside of the $\omega$-categorical context (so this is what we refer to when we talk about the ``Galois-sandwich condition''). 
Now, the assumption of (weak) elimination of imaginaries is often encountered in model theory, and particularly so in the context of $\omega$-categorical structures. Most of the familiar $\omega$-categorical structures {\em do} have weak elimination of imaginaries, thus so do free homogeneous structure in a finite relational language (cf. \cite{mc+tent} and \cite{ssip_canonical_hom}) and many other structures (also {\em with} algebraicity). The easiest example of an $\omega$-categorical structure which fails weak elimination of imaginaries is the theory of an equivalence relation with two infinite equivalence classes. It is easy to modify this example (e.g. using Henson digraphs) in order to show that there are continuum many non topologically isomorphic oligomorphic permutation groups which fail weak elimination of imaginaries, but no example of the failure of weak elimination of imaginaries which does not count as an ``obvious variation'' of the example above is known to the author. In this respect, our Theorem~\ref{main_theorem} seems to be very relevant to the open question from \cite{coarse} as it tells us: on one hand where to look for candidates for a possible reduction of $E_0$ to the topological isomorphism relation between oligomorphic permutation groups; and on the other hand it explains why so little is known on lower bounds for this relation, as many of the familiar structures do have weak elimination of imaginaries. %In our personal opinion, no real progress can be made on this problem until we find some truly new examples of $\omega$-categorical structures failing the \mbox{hypothesis of weak elimination of imaginaries.}

	The assumption of weak elimination of imaginaries has been often considered in the literature in combination with another property, the so-called {\em small index property} (SIP) (see e.g. the celebrated paper \cite{lascar_hodges_shelah}). When a countable structure $M$ has SIP every isomorphism between $\mrm{Aut}(M)$ and $\mrm{Aut}(N)$ is automatically continuous and so the topological structure of $\mrm{Aut}(M)$ can be recovered from the algebraic structure of $\mrm{Aut}(M)$. In the literature the combination of small index property and weak elimination of imaginaries is referred to as the {\em strong small index property} (SSIP). So our work applies to this context and in particular in this case it allows us to obtain purely algebraic information on $\mrm{Aut}(M)$. But clearly our analysis has relevance also outside of the SIP context, as in \ref{main_theorem}.
Finally, notice also that the assumption of weak elimination of imaginaries is often made also in the context of \mbox{ergodic theory of automorphisms groups and related studies, cf. \cite{dyn1, dyn2}.}
	
	\smallskip
\noindent

	The ideas and techniques behind Theorem~\ref{main_theorem} are actually purely model-theoretic, they are of interest also outside of the context of \cite{coarse}. In a way, they can be seen as a ``reconstruction result'' {\em \`a la} Rubin (cf. \cite{rubin}) but {\em with} algebraicity. In fact, a purely model theoretic way of stating our result is the following:

	\begin{theorem}\label{second_theorem} There is a canonical assignment associating to an arbitrary countable structure $M$ a structure $\mathcal{E}^{\mrm{ex}}_M$ such that if $M, N$ have locally finite Galois-algebracity, they are Galois-homogeneous and they satisfy the Galois-sandwich condition (i.e., $M, N$ satisfy the three conditions in Hypothesis \ref{hyp}), then we have:
	\begin{enumerate}[(1)]
	\item every topological isomorphism $\alpha$ from $\mrm{Aut}(M)$ onto $\mrm{Aut}(N)$ induces an isomorphism $f_{\alpha}$ from $\mathcal{E}^{\mrm{ex}}_M$ onto $\mathcal{E}^{\mrm{ex}}_N$, and conversely;
	\item the assignment $\alpha \mapsto f_{\alpha}$ is a group isomorphism from the group of topological automorphisms of $\mrm{Aut}(M)$ onto the automorphism group $\mrm{Aut}(\mathcal{E}^{\mrm{ex}}_M)$.
	\end{enumerate}
%	$\mathcal{E}^{\hat{\mrm{ex}}}_M$ such that every topological automorphism $\alpha$ of $\mrm{Aut}(M)$ induces an automorphism $f_{\alpha}$ of $\mathcal{E}^{\hat{\mrm{ex}}}_M$, and the assignment $\alpha \mapsto f_{\alpha}$ is a group isomorphism from the group of topological automorphisms of $\mrm{Aut}(M)$ onto $\mrm{Aut}(\mathcal{E}^{\hat{\mrm{ex}}}_M)$.
\end{theorem}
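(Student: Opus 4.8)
The plan is to construct the structure $\mathcal{E}^{\mrm{ex}}_M$ explicitly from the lattice of open subgroups of $G = \mrm{Aut}(M)$, in such a way that it is visibly invariant under topological isomorphisms, and then recover all the relevant data from it. First I would recall from the discussion before the theorem that, in the presence of the Galois-sandwich condition, every open subgroup $H \leq G$ containing the pointwise stabilizer of a finite set is sandwiched $G_{(K)} \leq H \leq G_{\{K\}}$ for a \emph{unique} smallest finite Galois-algebraically closed $K$; this is exactly the content of working with Galois imaginaries instead of genuine imaginaries. So I would take as the underlying set of $\mathcal{E}^{\mrm{ex}}_M$ the collection of such canonical algebraically closed finite sets $K$ (equivalently, a canonical system of representatives of the open subgroups of the form $G_{(K)}$), equipped with the natural $G$-action and with the relations that record inclusion $K \subseteq K'$, the ``size'' data (finiteness of orbits under the various stabilizers encoding algebraicity), and the interpretation maps; the point of the superscript $\mrm{ex}$ is that we are passing to an \emph{expansion} by all these Galois-imaginary sorts. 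Galois-homogeneity is what guarantees that $\mathcal{E}^{\mrm{ex}}_M$ carries enough structure to see the orbit equivalence relations as $0$-definable, and local finiteness of Galois-algebraicity is what keeps all the sorts genuinely ``finite-tuple'' so that the construction stays first-order.

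For part (1), given a topological isomorphism $\alpha \colon \mrm{Aut}(M) \to \mrm{Aut}(N)$, the key observation is that $\alpha$ maps the lattice of open subgroups of $\mrm{Aut}(M)$ isomorphically onto the lattice of open subgroups of $\mrm{Aut}(N)$ (continuity of $\alpha$ and of $\alpha^{-1}$ gives that open maps to open), and moreover it preserves the distinguished sublattice of subgroups of the form $G_{(K)}$: this is the crucial step, and it is where the Galois-sandwich hypothesis does the work, because being of the form $G_{(K)}$ for $K$ algebraically closed is characterized purely lattice-theoretically as being the \emph{bottom} of a sandwich interval, and this characterization is preserved by $\alpha$. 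Having transported the points of $\mathcal{E}^{\mrm{ex}}_M$ to the points of $\mathcal{E}^{\mrm{ex}}_N$, one checks that $\alpha$ also transports the $G$-action (via $H \mapsto g H g^{-1}$ corresponding to $\alpha(H) \mapsto \alpha(g)\alpha(H)\alpha(g)^{-1}$) and hence all the relations built from orbits; this yields the map $f_\alpha$ and the verification that it is an isomorphism of $\mathcal{E}^{\mrm{ex}}$-structures. The converse direction — an isomorphism $\mathcal{E}^{\mrm{ex}}_M \to \mathcal{E}^{\mrm{ex}}_N$ inducing a topological isomorphism of automorphism groups — follows because $M$ (up to bi-interpretability, by Coquand–Ahlbrandt–Ziegler) can be read off from $\mathcal{E}^{\mrm{ex}}_M$ together with its automorphism group, and an abstract $\mathcal{E}^{\mrm{ex}}$-isomorphism induces a homeomorphic isomorphism of the automorphism groups by functoriality of $\mrm{Aut}(-)$ on the category of these expanded structures.

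For part (2), one checks that the assignment $\alpha \mapsto f_\alpha$ respects composition and identity — immediate from the construction, since everything is defined by transport of structure — so it is a group homomorphism from the group of topological automorphisms of $\mrm{Aut}(M)$ to $\mrm{Aut}(\mathcal{E}^{\mrm{ex}}_M)$; injectivity follows because an $\alpha$ inducing the identity on $\mathcal{E}^{\mrm{ex}}_M$ fixes every $G_{(K)}$, hence every open subgroup, hence (by a standard argument using that $G$ embeds in $\mrm{Sym}(\omega)$ with the $G_{(a)}$ as basic stabilizers) $\alpha$ is the identity; surjectivity is the content of the converse in part (1) applied to $M = N$. I expect the main obstacle to be the precise first-order formalization of $\mathcal{E}^{\mrm{ex}}_M$: one must choose the sorts and relations carefully enough that (a) the orbit-counting / algebraicity data is first-order expressible, using local finiteness of Galois-algebraicity, and (b) the ``canonical representative of a sandwich interval'' is genuinely part of the structure rather than a metatheoretic choice — otherwise $f_\alpha$ is not well defined. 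Getting this bookkeeping right, and checking that Galois-homogeneity is exactly the hypothesis that makes the transported relations match, is the delicate part; the lattice-theoretic transport itself is then essentially formal.
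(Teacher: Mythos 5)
Your high-level strategy — transport via the lattice of open subgroups, using the sandwich property to pick out the distinguished subgroups $G_{(K)}$ — is the right idea and matches the paper's, but there are several genuine gaps.

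\textbf{The domain of $\mathcal{E}^{\mrm{ex}}_M$.} You propose taking the domain to be (essentially) the set $\mathbf{A}(M)$ of finite Galois-algebraically closed sets, i.e.\ the subgroups $G_{(K)}$. This is not enough. The paper's domain is the set of triples $(K,p,K')$ with $p\colon K\cong K'$, which are canonically identified with the \emph{cosets} $gG_{(K)}$. This extra data is what makes the converse direction and the injectivity in part~(2) work. With only the $K$'s, an $\alpha$ that fixes every $G_{(K)}$ \emph{setwise} need not be the identity (for instance, conjugation by any $h\in\bigcap_K G_{\{K\}}$ has this effect), so your injectivity step — ``$\alpha$ fixes every open subgroup, hence $\alpha=\mrm{id}$'' — is a non sequitur. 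With the coset picture this is repaired: $f_\alpha = \mrm{id}$ forces $\alpha(g)G_{(K)}=gG_{(K)}$ for all $g,K$, whence $g^{-1}\alpha(g)\in\bigcap_K G_{(K)}=\{e\}$.

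\textbf{Characterizing pointwise stabilizers.} You assert that being of the form $G_{(K)}$ is ``characterized purely lattice-theoretically as being the bottom of a sandwich interval.'' As written this is circular — every open $H$ lies in a unique sandwich interval $[G_{(K)},G_{\{K\}}]$, so to detect the bottoms you must already know which intervals are the sandwich ones. The paper's actual characterization (Proposition~\ref{char_point_stab}) is: $H\in\mathcal{GS}(M)$ is a pointwise stabilizer iff there is no open $H'\subsetneq H$ with $H'\trianglelefteq H$ and $[H:H']<\omega$. This uses the group structure (normality, finite index), not just the inclusion lattice, and is the key step you would need to spell out.

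\textbf{The converse direction.} You invoke Coquand–Ahlbrandt–Ziegler bi-interpretability to go from $\mathcal{E}^{\mrm{ex}}_M\cong\mathcal{E}^{\mrm{ex}}_N$ back to $\mrm{Aut}(M)\cong_{\mrm{top}}\mrm{Aut}(N)$. This is out of scope: Hypothesis~\ref{hyp} does not assume $\omega$-categoricity, so CAZ does not apply, and in any case you would still owe a proof that $M$ and $\mathcal{E}^{\mrm{ex}}_M$ are bi-interpretable. The paper instead reconstructs $g\in\mrm{Aut}(M)$ directly: given $f\colon\mathcal{E}^{\mrm{ex}}_M\cong\mathcal{E}^{\mrm{ex}}_N$, apply $f$ to the coherent family of triples $(K_i,g\restriction K_i, g(K_i))$ and take the limit, which converges to an automorphism of $N$ because $\bigcup\mathbf{A}(M)=M$ (this is where the orbital relations $E_n$ and the $\mrm{Dom}/\mrm{Cod}$ predicates earn their keep). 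Your sketch provides no analogue of this step, and without the triples $(K,p,K')$ in the domain there is no data from which to reconstruct $g$ pointwise.

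In short: the lattice-transport picture is the right starting point, but the crucial ingredients you are missing are (i) the coset/triple domain, (ii) the group-theoretic (not merely lattice-theoretic) characterization of pointwise stabilizers via absence of finite-index open normal subgroups, and (iii) a direct reconstruction of automorphisms in the converse direction rather than an appeal to bi-interpretability.
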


	The way \ref{second_theorem} follows from \ref{main_theorem} is that we map $G$ to some $M$ with $G = \mrm{Aut}(M)$ and pass to a sequence of $\omega$-categorical structures $\mathcal{E}^{\mrm{ex}}_M(k)$, for $k < \omega$, which are approximations to the structure $\mathcal{E}^{\mrm{ex}}_M$ from \ref{second_theorem}, and then essentially apply \ref{second_theorem} to conclude that topological isomorphism is smooth. The structure $\mathcal{E}^{\mrm{ex}}_M$ can be considered as a model theoretic version of the coarse groups from \cite{coarse}. More concretely, the structure $\mathcal{E}^{\mrm{ex}}_M$ from Theorem~\ref{second_theorem} is a natural generalization of the orbital structure $\mathcal{E}_M$ associated to a first-order structure $M$, i.e., the structure with the same domain as $M$ and with equivalence relations $E_n$ such that, for $\bar{a}$ and $\bar{b}$ tuples from $M$ of length $n$, $\bar{a} E_n \bar{b}$ if and only if there is automorphism of $M$ which maps $\bar{a}$ to $\bar{b}$. The structure we introduce should appear rather natural to a model theorist, in fact it is simply made of triples $(K, p, K')$, where $K$ and $K'$ are finite Galois-algebraically closed (or simply algebraically closed, in the $\omega$-categorical context) subsets of $M$ and $p: K \cong K'$. Furthermore, the apparently exotic assumption which we termed ``Galois-sandwich condition'' is nothing but the straightforward abstraction in the context of permutation group theory of one of the equivalent characterizations of weak elimination of imaginaries in the $\omega$-categorical context (see in particular \cite[pg. 146]{hodges} and surrounding discussion), so the theorem obviously \mbox{applies to $\omega$-categorical structures with weak elimination of imaginaries.}

	Our methods have also concrete applications to the purely group theoretic problem of characterization of automorphisms of automorphism groups of countable homogeneous structures (and so in particular of description of outer automorphisms of such groups). The main application is in the study of classical groups (cf. \cite{evans}). In this respect, we prove the analog in the infinite dimensional context of the classical description of $\mrm{Aut}(\mrm{GL}_n(F))$ due to Schreier and van der Waerden \cite{vector}. The problem of description of the automorphism group of a general linear group over a ring has a long and established tradition (see e.g. \cite{reiner} or Dieudonn{\'e}'s monograph~\cite{die_book}).
	
	\begin{theorem}\label{vector_spaces} Let $V$ a vector space of dimension $\aleph_0$ over a finite field $F$. Then:
	$$\mrm{Aut}(\mrm{GL}(V)) \cong \mrm{Aut}(F^\times) \rtimes \mrm{Aut}(\mrm{P}(V)),$$
where we denote by $\mrm{P}(V)$ the projective space associated to the space $V$ (i.e., the set of subspaces of $V$ ordered by inclusion) and by $F^\times$ 
the multiplicative group of~$F$.
%	$$\mrm{Aut}(\mrm{GL}(V)) \cong C \rtimes \mrm{Aut}(\mrm{PGL}(V))$$
%	$$\mrm{Aut}(\mrm{PGL}(V)) \cong \mrm{PGL}(V) \rtimes (\mrm{Aut}(k) \times C_2).$$
\end{theorem}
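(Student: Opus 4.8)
The plan is to present $\mrm{GL}(V)$ as $\mrm{Aut}(M)$, where $M$ is the structure with universe $V$ equipped with addition and the unary scalar operations $x\mapsto\lambda x$ ($\lambda\in F$); since $F$ is finite and $\dim_F V=\aleph_0$, the structure $M$ is countable, $\omega$-categorical and homogeneous, and $\mrm{Aut}(M)=\mrm{GL}(V)$. The first task is to check that $M$ satisfies the three conditions of Hypothesis~\ref{hyp}, so that Theorem~\ref{second_theorem} applies. Galois-algebraicity is locally finite because the Galois-algebraic closure of a finite subset of $V$ is the (finite-dimensional, hence finite) subspace it spans; Galois-homogeneity holds because any $F$-linear isomorphism between finite-dimensional subspaces extends to an element of $\mrm{GL}(V)$; and the Galois-sandwich condition is precisely weak elimination of imaginaries for $M$, which is the classical fact that $F$-vector spaces have weak elimination of imaginaries --- concretely, for finite-dimensional $A,B\subseteq V$ one verifies $\mrm{GL}(V)_{(A\cap B)}=\langle \mrm{GL}(V)_{(A)}\cup \mrm{GL}(V)_{(B)}\rangle$ by fixing complements $A=(A\cap B)\oplus C$, $B=(A\cap B)\oplus D$, $V=(A+B)\oplus E$ and correcting an arbitrary $g\in \mrm{GL}(V)_{(A\cap B)}$ first on $A$ and then on $B$.

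With the hypotheses in hand, Theorem~\ref{second_theorem}(2) yields a group isomorphism between the topological automorphism group of $\mrm{GL}(V)$ and $\mrm{Aut}(\mathcal{E}^{\mrm{ex}}_V)$. To pass from topological to abstract automorphisms I would appeal to the small index property of $\mrm{GL}(V)$ for $V$ of countable dimension over a finite field (a known instance of the small index property for infinite-dimensional classical groups, cf. \cite{evans}): every abstract automorphism of $\mrm{GL}(V)$ is then continuous, so $\mrm{Aut}(\mrm{GL}(V))\cong \mrm{Aut}(\mathcal{E}^{\mrm{ex}}_V)$, and the problem is reduced to computing the latter group.

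For this computation I would first recover from $\mathcal{E}^{\mrm{ex}}_V$ a definable copy of the lattice of finite-dimensional subspaces of $V$ (the ``identity triples'' $(K,\mrm{id}_K,K)$, ordered by the relation induced by inclusion). Every $\sigma\in\mrm{Aut}(\mathcal{E}^{\mrm{ex}}_V)$ then induces a lattice automorphism of this lattice, which --- since every subspace is the directed union of its finite-dimensional subspaces --- extends uniquely to an automorphism of the full subspace lattice, i.e. to an element of $\mrm{Aut}(\mrm{P}(V))$; by the Fundamental Theorem of Projective Geometry (applicable since $\dim V=\aleph_0\geq 3$), $\mrm{Aut}(\mrm{P}(V))=\mrm{P\Gamma L}(V)$. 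Conversely, conjugation by a semilinear bijection $g$ of $V$ sends $(K,p,K')$ to $(gK,\,gpg^{-1},\,gK')$ with $gpg^{-1}$ again $F$-linear, hence acts on $\mathcal{E}^{\mrm{ex}}_V$ by an automorphism; this gives a homomorphism $\mrm{\Gamma L}(V)\to\mrm{Aut}(\mathcal{E}^{\mrm{ex}}_V)$ whose kernel is the group of scalars $F^\times$, i.e. a section $\mrm{Aut}(\mrm{P}(V))=\mrm{P\Gamma L}(V)\hookrightarrow\mrm{Aut}(\mathcal{E}^{\mrm{ex}}_V)$ of the map above. Hence $\mrm{Aut}(\mathcal{E}^{\mrm{ex}}_V)\cong N\rtimes\mrm{Aut}(\mrm{P}(V))$, where $N$ is the kernel, namely the group of automorphisms fixing every finite-dimensional subspace.

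The heart of the proof is the identification $N\cong\mrm{Aut}(F^\times)$. An element of $N$ acts on each hom-set $\mrm{Iso}(K,K')$ of linear isomorphisms of finite-dimensional subspaces, respecting identities, composition, and the extension order, and in particular on each group $\mrm{Iso}(L,L)\cong F^\times$ and each $F^\times$-torsor $\mrm{Iso}(L,L')$ for one-dimensional $L,L'$. I would first show that, because the canonical (``scalar'') identifications among all these copies of $F^\times$ are themselves recorded in $\mathcal{E}^{\mrm{ex}}_V$, such an automorphism is forced on the whole one-dimensional part to be a single $\psi\in\mrm{Aut}(F^\times)$, and then argue by induction on dimension --- each linear isomorphism of finite-dimensional subspaces being determined by its action on lines together with the constraints imposed by the two- and three-dimensional subspaces lying above it --- that $\psi$ determines the automorphism completely, giving an embedding $N\hookrightarrow\mrm{Aut}(F^\times)$; surjectivity would be obtained by exhibiting, for each $\psi$, an explicit automorphism of $\mathcal{E}^{\mrm{ex}}_V$ and checking it preserves all the relations. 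Putting the pieces together, $\mrm{Aut}(\mrm{GL}(V))\cong\mrm{Aut}(\mathcal{E}^{\mrm{ex}}_V)\cong\mrm{Aut}(F^\times)\rtimes\mrm{Aut}(\mrm{P}(V))$. I expect the last step --- establishing exactly how much of the composition-and-restriction structure of the groupoid of finite-dimensional subspaces is retained by $\mathcal{E}^{\mrm{ex}}_V$, so that the residual freedom is precisely one global field-multiplicative twist --- to be the main obstacle; by contrast, the absence in the infinite-dimensional picture of the contragredient automorphism that features in the Schreier--van der Waerden description should come essentially for free, since $\mrm{P}(V)$ admits no correlations when $\dim V^*>\dim V$.
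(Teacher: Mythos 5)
Your proposal is essentially the same argument as the paper's: pass from $\mrm{Aut}(\mrm{GL}(V))$ to $\mrm{Aut}(\mathcal{E}^{\mrm{ex}}_V)$ via Theorem~\ref{second_theorem} and the SSIP of $\mrm{GL}(V)$ (citing \cite{evans}); observe that every automorphism of $\mathcal{E}^{\mrm{ex}}_V$ induces an automorphism of the subspace lattice $\mrm{P}(V)$; identify the kernel of this homomorphism with $\mrm{Aut}(F^\times)$; and split the resulting short exact sequence via the Fundamental Theorem of Projective Geometry. The small technical differences --- you work with the full $\mathcal{E}^{\mrm{ex}}_V$ while the paper uses the depth-one expansion $\mathcal{E}^{\hat{\mrm{ex}}}_V(1)$ (exploiting $k_V=1$, so the kernel analysis is carried out entirely on lines rather than by induction on dimension); you build the section by conjugating the groupoid directly by a semilinear map, while the paper routes through the assignment $\alpha\mapsto f_\alpha$ applied to $\hat g(\cdot)\hat g^{-1}$ --- are cosmetic and do not change the structure of the proof. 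Your remark about the disappearance of the contragredient, because $\mrm{P}(V)$ has no correlations when $\dim V^*>\dim V$, is a useful observation the paper leaves implicit. One caveat: the paper also omits the verification that the kernel collapses to a single $\psi\in\mrm{Aut}(F^\times)$ (it labels this ``standard''), so the step you flag as the main obstacle is left to the reader in both treatments; your torsor/induction outline is a reasonable way to fill it, but neither you nor the paper have actually discharged it.
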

	Notice that the structure from the previous theorem {\em does} have algebraicity, which should be considered as a virtue of our approach, since most of the results in the area work under the assumption of {\em no algebraicity}. Our methods also have applications on structures with no algebraicity:

%\begin{theorem}\label{boolean} Let $\mathbb{B}$ be the countable atomless Boolean algebra, then we have:
%%$$1 \rightarrow \mathbb{Z}_2 \rightarrow \mrm{Aut}(\mrm{Aut}(\mathbb{B})) \rightarrow H \rightarrow 1$$
%%$$1 \rightarrow \mrm{Aut}(\mathbb{B}) \rightarrow H \rightarrow \mathbb{Z}_2 \rightarrow 1,$$
%%and neither of the two exact sequences split.
%$$\mrm{Aut}(\mrm{Aut}(\mathbb{B})) \cong \mathbb{Z}_2 \rtimes (\mathbb{Z}_2 \rtimes \mrm{Aut}(\mathbb{B})).$$
%\end{theorem}

	\begin{theorem}\label{no_alge} Let $M$ be a countable $\omega$-categorical homogeneous structure with weak elimination of imaginaries and no algebraicity, then:
	$$\mrm{Aut}_{\mrm{top}}(\mrm{Aut}(M)) \cong \mrm{Aut}(\mathcal{E}_M),$$
where $\mathcal{E}_M$ is the orbital structure associated with $M$. In particular, if $M$ has in addition the small index property, then $\mrm{Aut}(\mrm{Aut}(M)) \cong \mrm{Aut}(\mathcal{E}_M)$. 
%Thus, if $M$ is a countable free homogeneous structure in a finite relational language, then:
%	 $$\mrm{Aut}(\mrm{Aut}(M)) \cong \mrm{Aut}(\mathcal{E}_M).$$
\end{theorem}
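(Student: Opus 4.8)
The plan is to deduce Theorem~\ref{no_alge} from Theorem~\ref{second_theorem}. The first task is to check that a countable $\omega$-categorical homogeneous structure $M$ with weak elimination of imaginaries and no algebraicity satisfies the three conditions in Hypothesis~\ref{hyp}. Here $\omega$-categoricity gives local finiteness of the Galois-algebraic closure (by Ryll-Nardzewski, for every finite $A$ the group $G_{(A)}$ has finitely many orbits, hence only finitely many finite ones; under no algebraicity, none of size $>1$); homogeneity of $M$ gives Galois-homogeneity, since a partial automorphism of the orbital structure between finite sets is then a genuine partial isomorphism of $M$ and so extends to an automorphism; and the Galois-sandwich condition is, in the $\omega$-categorical setting, precisely weak elimination of imaginaries, as recalled after Theorem~\ref{main_theorem} (see \cite[Lemma~1.3]{evans_finite_cover}). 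Theorem~\ref{second_theorem}(2) then yields $\mrm{Aut}_{\mrm{top}}(\mrm{Aut}(M)) \cong \mrm{Aut}(\mathcal{E}^{\mrm{ex}}_M)$ via $\alpha \mapsto f_\alpha$. So it remains to prove that $\mrm{Aut}(\mathcal{E}^{\mrm{ex}}_M) \cong \mrm{Aut}(\mathcal{E}_M)$ when $M$ has no algebraicity.

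For this, note that no algebraicity forces $\mrm{acl}(A)=A$ for every finite $A \subseteq M$, so the elements of $\mathcal{E}^{\mrm{ex}}_M$ are exactly the triples $(K,p,K')$ with $K,K'$ arbitrary finite subsets of $M$ and $p\colon K \to K'$ a partial isomorphism (equivalently, by homogeneity, the restriction of an element of $G=\mrm{Aut}(M)$). I will exhibit mutually inverse group homomorphisms. Given $g \in \mrm{Aut}(\mathcal{E}_M)$ — that is, a permutation of $M$ preserving every orbital relation $E_n$, hence (by homogeneity) every quantifier-free type — the assignment $(K,p,K') \mapsto (gK,\, gpg^{-1},\, gK')$ is well defined: conjugating a partial isomorphism by an orbit-type-preserving permutation again yields a partial isomorphism, because for a tuple $\bar a$ from $K$ we have $gpg^{-1}(g\bar a)=g(p\bar a)$, and $\bar a$, $g\bar a$, $p\bar a$, $g(p\bar a)$ all lie in one $E_{\mrm{lg}(\bar a)}$-class. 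This assignment visibly preserves the natural structure of $\mathcal{E}^{\mrm{ex}}_M$ (restriction, composition and inversion, and the orbital relations on domains and codomains), so it defines $\hat g \in \mrm{Aut}(\mathcal{E}^{\mrm{ex}}_M)$, and $g \mapsto \hat g$ is a homomorphism.

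Conversely, let $h \in \mrm{Aut}(\mathcal{E}^{\mrm{ex}}_M)$. The identity singleton triples $(\{a\},\mrm{id},\{a\})$ form a $\mathcal{E}^{\mrm{ex}}_M$-definable set (the atoms of the restriction order fixed by inversion), so $h$ restricts to a permutation $g$ of $M$ via $h(\{a\},\mrm{id},\{a\}) = (\{ga\},\mrm{id},\{ga\})$; using the ``same domain / same codomain'' relations, $h$ then sends each single-point triple $(\{a\},a\mapsto b,\{b\})$ to $(\{ga\},ga\mapsto gb,\{gb\})$. Since no algebraicity makes every $(K,p,K')$ the join, in the restriction order, of its single-point restrictions, and $h$ is an automorphism, $h(K,p,K')$ is the join of the triples $(\{ga\},ga\mapsto gp(a),\{gp(a)\})$ for $a\in K$; as this join exists in $\mathcal{E}^{\mrm{ex}}_M$, it equals $(gK,\,gpg^{-1},\,gK')$. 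In particular $gpg^{-1}$ is always a partial isomorphism, and applying this to a $p$ witnessing an orbit equality of tuples shows $g \in \mrm{Aut}(\mathcal{E}_M)$, with $h=\hat g$. Thus $g\mapsto\hat g$ and $h\mapsto g$ are inverse isomorphisms, proving $\mrm{Aut}(\mathcal{E}^{\mrm{ex}}_M)\cong\mrm{Aut}(\mathcal{E}_M)$. For the final sentence, if $M$ also has the small index property then every abstract isomorphism between automorphism groups of countable structures is automatically continuous (as recalled in the introduction), so $\mrm{Aut}(\mrm{Aut}(M)) = \mrm{Aut}_{\mrm{top}}(\mrm{Aut}(M))$, and the claim follows.

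The step I expect to be the main obstacle is the coordinatization in the previous paragraph: verifying carefully that no algebraicity really does make the single-point triples both a definable subset of $\mathcal{E}^{\mrm{ex}}_M$ and dense enough, in the restriction order, to reconstruct every element \emph{and every relation} of $\mathcal{E}^{\mrm{ex}}_M$, so that $h\mapsto g$ is well defined and inverse to $g\mapsto\hat g$. This is exactly where the hypothesis bites: with algebraicity the minimal algebraically closed sets are strictly larger than points and carry nontrivial internal structure, which is precisely why $\mathcal{E}^{\mrm{ex}}_M$ is a genuine refinement of $\mathcal{E}_M$ in general.
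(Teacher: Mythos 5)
Your route differs from the paper's. Since no algebraicity gives $k_M=1$, the paper works with the depth-one structure $\mathcal{E}^{\mrm{ex}}_M(1)$, whose elements are exactly the triples $(\{a\},a\mapsto b,\{b\})$, observes that this structure is bi-interpretable with $\mathcal{E}_M$, and then finishes by Theorem~\ref{second_theorem} applied at level $k=1$. You instead work with $\mathcal{E}^{\mrm{ex}}_M=\mathcal{E}^{\mrm{ex}}_M(\omega)$ and construct the isomorphism $\mrm{Aut}(\mathcal{E}^{\mrm{ex}}_M)\cong\mrm{Aut}(\mathcal{E}_M)$ by hand, recovering each triple $(K,p,K')$ from its single-point restrictions. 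This can be made to work, but it is longer and it leans on the definability in $\mathcal{E}^{\mrm{ex}}_M$ of the restriction order, inversion, and the auxiliary predicates of $\mathcal{E}^{\hat{\mrm{ex}}}_M$ (the paper asserts this definitional-expansion claim without proof); cutting to $k=1$, as the paper does, makes all of that unnecessary because at level one the structure is visibly coded into $\mathcal{E}_M$.

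There is also a genuine error in your justification of $g\mapsto\hat g$. You write that for $g\in\mrm{Aut}(\mathcal{E}_M)$ and a partial isomorphism $p$ the tuples $\bar a$, $g\bar a$, $p\bar a$, $g(p\bar a)$ ``all lie in one $E_{\mrm{lg}(\bar a)}$-class,'' and earlier that such a $g$ preserves ``(by homogeneity) every quantifier-free type.'' Both claims are false in general: an automorphism of $\mathcal{E}_M$ preserves each $E_n$ only as a $2n$-ary relation and may permute its classes, so $\bar a$ and $g\bar a$ need not be $E_n$-equivalent. For instance $q\mapsto -q$ on $(\mathbb{Q},<)$ belongs to $\mrm{Aut}(\mathcal{E}_{\mathbb{Q}})$ but sends increasing pairs to decreasing pairs; if your claim were right, $\mrm{Aut}(\mathcal{E}_M)$ would collapse to $\mrm{Aut}(M)$ and the theorem would be vacuous. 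The correct (and shorter) argument is: $\bar a\,E_n\,p\bar a$ because $p$ extends to an automorphism of $M$, and since $g$ preserves the \emph{relation} $E_n$ we get $g\bar a\,E_n\,g(p\bar a)=gpg^{-1}(g\bar a)$, which is exactly the statement that $gpg^{-1}$ is a partial isomorphism. With this fix, and granting the definability facts above, your argument closes; but the paper's $k=1$ reduction is the cleaner route.
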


	\begin{corollary}\label{free_relational} If $M$ is a countable free homogeneous structure in a finite relational language, then we have that $\mrm{Aut}(\mrm{Aut}(M)) \cong \mrm{Aut}(\mathcal{E}_M)$.
\end{corollary}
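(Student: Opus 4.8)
The plan is to obtain Corollary~\ref{free_relational} as a direct consequence of Theorem~\ref{no_alge}; concretely, I would check that every countable free homogeneous structure $M$ in a finite relational language meets all the hypotheses of the ``in particular'' clause of that theorem, namely: $\omega$-categoricity, homogeneity, weak elimination of imaginaries, no algebraicity, and the small index property. Homogeneity is given, and $\omega$-categoricity is immediate from the Ryll-Nardzewski theorem, since the age of $M$ is a uniformly locally finite class (finitely many relational symbols, all of bounded arity, hence finitely many structures on each finite vertex set).

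Next I would verify \emph{no algebraicity}. Since the age of $M$ is a free amalgamation class, given a finite $A \subseteq M$ and $a \in M \setminus A$ one can freely amalgamate arbitrarily many copies of $A \cup \{a\}$ over $A$, so the type of $a$ over $A$ is realized infinitely often in $M$; hence the orbit of $a$ under $G_{(A)}$ is infinite, and therefore every finite subset of $M$ is algebraically closed. This is precisely the ``no algebraicity'' hypothesis of Theorem~\ref{no_alge}.

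For \emph{weak elimination of imaginaries} and the \emph{small index property} I would invoke the established theory of free homogeneous structures. That free homogeneous structures in a finite relational language have weak elimination of imaginaries is known (see \cite{mc+tent, ssip_canonical_hom}); in the permutation-group formulation used in this paper it amounts to checking the Galois-sandwich/WEI condition, which follows from triviality of algebraic closure together with the stationary independence notion provided by free amalgamation. The small index property for such $M$ is also classical: the relevant extension property for partial automorphisms (Hrushovski; Herwig--Lascar) holds for free amalgamation classes in finite relational languages, and by the Kechris--Rosendal criterion this produces ample generic automorphisms of $\mrm{Aut}(M)$, whence the small index property (indeed, together with weak elimination of imaginaries, the strong small index property). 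With all five hypotheses in place, Theorem~\ref{no_alge} yields $\mrm{Aut}(\mrm{Aut}(M)) \cong \mrm{Aut}(\mathcal{E}_M)$.

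I do not expect a genuine obstacle here: the Corollary carries no new argument beyond Theorem~\ref{no_alge}, and the proof is entirely a matter of assembling known facts about free homogeneous structures. The only points that are not completely routine are the appeals to weak elimination of imaginaries and to the small index property; if one wanted this step to be self-contained on the first of these, the cleanest option would be to record ``free amalgamation class in a finite relational language $\Rightarrow$ WEI'' as a short lemma phrased directly via the pointwise-stabilizer characterization of WEI used throughout the paper, while the small index property would still be cited from the literature.
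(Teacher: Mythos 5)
Your proposal is correct and follows essentially the same route as the paper's (very terse) proof, which simply invokes Theorem~\ref{no_alge} together with the reference \cite{ssip_canonical_hom}; that reference supplies both the small index property and weak elimination of imaginaries for free homogeneous structures in one package (the strong small index property), which is precisely the "assembling known facts" step you spell out. Your verifications of $\omega$-categoricity and of no algebraicity via free amalgamation are the standard ones and match the intent of the paper.
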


	\begin{corollary}\label{rationals} Let $\mathbb{Q}$ be the countable dense linear order without endpoints. Then:
$$\mrm{Aut}(\mrm{Aut}(\mathbb{Q})) \cong \mrm{Aut}(\mathbb{Q}) \rtimes \mathbb{Z}_2.$$
%	$$1 \rightarrow \mrm{Aut}(\mathbb{Q}) \rightarrow \mrm{Aut}(\mrm{Aut}(\mathbb{B})) \rightarrow \mathbb{Z}_2 \rightarrow 1,$$
%and the exact sequence does not split. In particular, $\mrm{Out}(\mrm{Aut}(\mathbb{Q})) \cong \mathbb{Z}_2$.
\end{corollary}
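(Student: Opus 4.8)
The plan is to obtain Corollary~\ref{rationals} by specializing Theorem~\ref{no_alge} to $M = \mathbb{Q} = (\mathbb{Q},<)$ and then computing $\mrm{Aut}(\mathcal{E}_{\mathbb{Q}})$ by hand. So the first step is to verify that $\mathbb{Q}$ meets all the hypotheses of Theorem~\ref{no_alge}: it is $\omega$-categorical and homogeneous by Cantor's classical back-and-forth argument; it has no algebraicity, since the pointwise stabilizer of a finite set $F \subseteq \mathbb{Q}$ acts transitively on each of the finitely many infinite convex pieces into which $F$ cuts $\mathbb{Q}$, again by back-and-forth; and it has weak elimination of imaginaries, which in the permutation-group formulation recorded in the introduction amounts to checking that $G_{(A \cap B)} = \langle G_{(A)} \cup G_{(B)} \rangle_G$ for finite $A, B \subseteq \mathbb{Q}$ and $G = \mrm{Aut}(\mathbb{Q})$ (here every finite set is algebraically closed, and this identity is a standard fact). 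Finally, $\mathbb{Q}$ has the small index property by the classical result of Truss. Hence Theorem~\ref{no_alge} applies and yields $\mrm{Aut}(\mrm{Aut}(\mathbb{Q})) \cong \mrm{Aut}(\mathcal{E}_{\mathbb{Q}})$.

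The second and main step is the identification $\mrm{Aut}(\mathcal{E}_{\mathbb{Q}}) \cong \mrm{Aut}(\mathbb{Q}) \rtimes \mathbb{Z}_2$. Here I would use the elementary fact that in $\mathbb{Q}$ the $\mrm{Aut}(\mathbb{Q})$-orbit of an $n$-tuple $\bar a$ is completely determined by its quantifier-free order type — which coordinates coincide and how the distinct ones are linearly ordered — and that this is already "binary data" encoded by the relation $E_2$. Consequently a bijection $g$ of $\mathbb{Q}$ is an automorphism of $\mathcal{E}_{\mathbb{Q}}$ if and only if it respects $E_2$ as a structure; since $E_2$ has exactly three classes and $g$ fixes the diagonal class setwise, $g$ either fixes both off-diagonal classes, in which case $a < b \Rightarrow g(a) < g(b)$ and $g \in \mrm{Aut}(\mathbb{Q})$, or swaps them, in which case $g$ is order-reversing; and conversely every order-automorphism and every order-anti-automorphism of $(\mathbb{Q},<)$ preserves all the $E_n$. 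Thus $\mrm{Aut}(\mathcal{E}_{\mathbb{Q}})$ is exactly the group $G^{*}$ of order-preserving together with order-reversing bijections of $\mathbb{Q}$. Fixing one order-reversing involution $\sigma$ (say $x \mapsto -x$), we have $\sigma^2 = \mrm{id}$, $\mrm{Aut}(\mathbb{Q}) \trianglelefteq G^{*}$ of index $2$, and $\langle \sigma \rangle \cap \mrm{Aut}(\mathbb{Q}) = \{1\}$, so $G^{*} = \mrm{Aut}(\mathbb{Q}) \rtimes \langle \sigma \rangle \cong \mrm{Aut}(\mathbb{Q}) \rtimes \mathbb{Z}_2$, which completes the proof.

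I expect the only point requiring genuine care is the bookkeeping in the second step: one must check both that a bijection respecting $E_2$ automatically respects every $E_n$ (this is exactly where one invokes that the order type of a tuple is reconstructible from its pairwise data, using quantifier elimination for $\mathbb{Q}$), and dually that no extra automorphisms of $\mathcal{E}_{\mathbb{Q}}$ unrelated to the order can occur. Checking the hypotheses of Theorem~\ref{no_alge} in the first step is routine, modulo citing the small index property for $\mathbb{Q}$.
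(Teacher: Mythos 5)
Your proposal is correct and takes essentially the same approach as the paper: both apply Theorem~\ref{no_alge} to $\mathbb{Q}$ (citing SSIP), then identify $\mrm{Aut}(\mathcal{E}_{\mathbb{Q}})$ as the group of order-preserving and order-reversing bijections, split off $\mrm{Aut}(\mathbb{Q})$ as a normal subgroup of index two, and use $x\mapsto -x$ as the complementary involution. The paper phrases the second step through the exact-sequence framework of Proposition~\ref{useful_prop} rather than computing $\mrm{Aut}(\mathcal{E}_{\mathbb{Q}})$ outright, but the underlying computation is the same, and you in fact spell out the one point the paper leaves implicit, namely that preserving the two off-diagonal $E_2$-classes already forces preservation of every $E_n$-class.
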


	\begin{corollary}\label{outer_graphs} Denoting by $K_m$ a finite clique of size $m \geq 3$, we have:
	\begin{enumerate}[(1)]
	\item the $K_m$-free random graph $R(K_m)$ satisfies $\mrm{Out}(\mrm{Aut}(R(K_m))) \cong \{e \}$;
	\item\label{outer_graphs_2} the $k$-colored random graph $R_k$ ($k \geq 2$) satisfies $\mrm{Out}(\mrm{Aut}(R_k)) \cong \mrm{Sym}(k)$.
	\end{enumerate}
\end{corollary}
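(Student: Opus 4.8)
The plan is to deduce both parts of the statement from Corollary~\ref{free_relational}. Both $R(K_m)$ and the $k$-coloured random graph $R_k$ are countable $\omega$-categorical homogeneous structures with no algebraicity, free homogeneous in a finite relational language (for $R_k$ one takes a language with one binary relation per colour; if one prefers to read $R_k$ as the complete graph with $k$-coloured edges, one routes instead through Theorem~\ref{no_alge} together with the classical facts that $R_k$ has weak elimination of imaginaries and the small index property). So for $M \in \{R(K_m), R_k\}$ Corollary~\ref{free_relational} gives $\mrm{Aut}(\mrm{Aut}(M)) \cong \mrm{Aut}(\mathcal{E}_M)$, and by the construction underlying item (1)--(2) of Theorem~\ref{second_theorem} (specialised to the orbital structure $\mathcal{E}_M$, which has the same domain as $M$) this isomorphism carries the inner automorphism $c_g$ of $\mrm{Aut}(M)$ to $g$ regarded as an element of the subgroup $\mrm{Aut}(M)\leq\mrm{Aut}(\mathcal{E}_M)$; hence it carries $\mrm{Inn}(\mrm{Aut}(M))$ onto this tautological copy of $\mrm{Aut}(M)$. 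Since, as we check below, $\mrm{Aut}(M)$ is normal in $\mrm{Aut}(\mathcal{E}_M)$, we obtain $\mrm{Out}(\mrm{Aut}(M)) \cong \mrm{Aut}(\mathcal{E}_M)/\mrm{Aut}(M)$, and everything reduces to a combinatorial computation of $\mrm{Aut}(\mathcal{E}_M)$ and of the index of $\mrm{Aut}(M)$ in it.

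For this computation I would concentrate on $E_2$. Any $\sigma\in\mrm{Aut}(\mathcal{E}_M)$ permutes the $E_n$-classes for each $n$; acting on ordered pairs it fixes the diagonal class $\Delta=\{(a,a)\}$, since $(\sigma\times\sigma)(\Delta)=\Delta$, and so permutes the remaining $E_2$-classes. For $M=R(K_m)$ these are exactly two, the edges and the non-edges; for $M=R_k$ they are exactly the $k$ colour classes $C_1,\dots,C_k$, each a single orbit on ordered pairs by homogeneity and symmetry of the colour relations. This yields a homomorphism $\rho\colon\mrm{Aut}(\mathcal{E}_M)\to\mrm{Sym}(k)$ (with $k=2$ in the first case), and its kernel consists precisely of those $\sigma$ preserving every colour relation (resp. the edge relation), i.e.\ $\ker\rho=\mrm{Aut}(M)$, because the language of $M$ is exactly these binary relations and $\mrm{Aut}(M)\leq\mrm{Aut}(\mathcal{E}_M)$ always. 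In particular $\mrm{Aut}(M)\trianglelefteq\mrm{Aut}(\mathcal{E}_M)$ and $\mrm{Out}(\mrm{Aut}(M))\cong\mrm{Im}(\rho)$, so it remains only to identify the image.

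For (1), $\mrm{Im}(\rho)$ is trivial: if some $\sigma$ realised the transposition it would be a graph isomorphism from $R(K_m)$ onto its complement $\overline{R(K_m)}$; but $R(K_m)$ embeds every finite edgeless graph (each being $K_m$-free), so $\overline{R(K_m)}$ contains $K_m$, while $R(K_m)$ does not, a contradiction. Hence $\mrm{Aut}(\mathcal{E}_{R(K_m)})=\mrm{Aut}(R(K_m))$ and $\mrm{Out}(\mrm{Aut}(R(K_m)))\cong\{e\}$. For (2), $\rho$ is surjective: relabelling the colours of $R_k$ by $\pi\in\mrm{Sym}(k)$ yields a structure which is again the Fra\"iss\'e limit of the same class, hence isomorphic to $R_k$; composing the relabelling with such an isomorphism gives a permutation $\tau_\pi$ of the common domain sending every colour-$i$ pair to a colour-$\pi(i)$ pair. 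By homogeneity in a binary relational language, the $E_n$-class of a tuple of $R_k$ is determined by the matrix of colours of its pairs, and $\tau_\pi$ transforms these matrices by applying $\pi$ entrywise; hence $\tau_\pi\in\mrm{Aut}(\mathcal{E}_{R_k})$ and $\rho(\tau_\pi)=\pi$. Therefore $\mrm{Out}(\mrm{Aut}(R_k))\cong\mrm{Sym}(k)$.

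The main obstacle is not any single computation but the bookkeeping ensuring that $\mathcal{E}_M$ carries no structure beyond its binary part for these particular $M$: one must verify that the orbit of an $n$-tuple is genuinely governed by its $2$-types (immediate from homogeneity in a binary relational language, but it has to be invoked correctly), and one must pin down that the isomorphism supplied by Corollary~\ref{free_relational}/Theorem~\ref{no_alge} really does match $\mrm{Inn}(\mrm{Aut}(M))$ with the tautological copy of $\mrm{Aut}(M)$ inside $\mrm{Aut}(\mathcal{E}_M)$, so that the passage to $\mrm{Out}$ is legitimate. A secondary, routine point for $R_k$ is to confirm it satisfies the hypotheses invoked (homogeneity, no algebraicity, and, if one goes through Theorem~\ref{no_alge}, weak elimination of imaginaries and the small index property), all of which are classical.
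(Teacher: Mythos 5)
Your proposal follows essentially the same route as the paper: reduce via the isomorphism $\mrm{Aut}(\mrm{Aut}(M)) \cong \mrm{Aut}(\mathcal{E}_M)$, match $\mrm{Inn}(\mrm{Aut}(M))$ with the tautological copy of $\mrm{Aut}(M)\leq\mrm{Aut}(\mathcal{E}_M)$ (which the paper packages once and for all in Proposition~\ref{useful_prop}), and then compute $\mrm{Aut}(\mathcal{E}_M)/\mrm{Aut}(M)$ through the induced action on the non-diagonal $E_2$-classes, using the $K_m$-freeness obstruction for (1) and the colour-relabelling construction for (2). These are exactly the two combinatorial arguments in the paper's proof.

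One inaccuracy worth flagging: $R_k$ is \emph{not} free homogeneous, even when presented in the language with one binary relation per colour. The free amalgam of two finite complete $k$-coloured graphs over a common subgraph leaves the mixed pairs uncoloured, hence falls outside the age; so Corollary~\ref{free_relational} does not apply to $R_k$. Your parenthetical alternative --- routing through Theorem~\ref{no_alge} via weak elimination of imaginaries and the small index property for $R_k$ --- is the correct path and is exactly what the paper does (by invoking Proposition~\ref{useful_prop}), so the argument as a whole is sound; just drop the ``free homogeneous'' justification for $R_k$. For $R(K_m)$ the free-homogeneous route is fine, since the class of finite $K_m$-free graphs is genuinely a free amalgamation class.
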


	Notice that Corollary~\ref{outer_graphs}(\ref{outer_graphs_2}) is explicitly stated in \cite{rubin}. Furthermore, in \cite{cameron_tarzi} it is proved that the exact sequence $1 \rightarrow \mrm{Aut}(R_m) \rightarrow \mrm{Aut}(\mrm{Aut}(R_m)) \rightarrow \mrm{Sym}(m) \rightarrow 1$ splits if and only if $m$ is odd. We do not know if Theorems~\ref{vector_spaces} and \ref{rationals} are known or explicitly stated somewhere. In any case, one of the main conceptual points of this paper is to frame all these results on outer automorphisms of automorphism groups of countable homogeneous structures under a single umbrella. Further, we believe that with our methods in principle it is possible to achieve an explicit description of $\mrm{Out}(\mrm{Aut}(M))$ for any $\omega$-categorical homogeneous structure with SSIP.
	
%		The structure of the paper is simple, in Section 2 we prove Theorem~\ref{main_theorem} and in Section 3 we prove all the rest.

	\medskip
	\noindent We finish this introduction by giving an idea of the proof of Theorems~\ref{main_theorem} and \ref{second_theorem}. The fundamental idea has a long tradition, which is also mentioned at the end of the introduction of Rubin's paper \cite{rubin}, where he refers to works of Dixon, Neumann and Thomas \cite{dixon}, Truss \cite{truss}, and of Droste, Holland and Macpherson \cite{droste}. The idea is to use weak elimination of imaginaries in order to characterize those open subgroups of $\mrm{Aut}(M)$ which are stabilizers of singletons. As we will see, under the assumption of weak elimination of imaginaries, the open subgroups of $\mrm{Aut}(M)$ are exactly what we call the {\em generalized pointwise stabilizers} of $M$, i.e., those subgroups of the form $G_{(K, L)} = \{ f \in \mrm{Aut}(M) : f \restriction K \in L \}$, for $K$ a finite algebraically closed subset of $M$ and $L \leq \mrm{Aut}(K)$. If $M$ has no algebraicity (or simply $\mrm{acl}_M(a) = \{a\}$), then the stabilizers of singletons are simply the open proper subgroups of $\mrm{Aut}(M)$ which are maximal under inclusion, and this in turn reduces the understanding of topological isomorphisms from $\mrm{Aut}(M)$ to the problem of understanding of the orbital structure associated with $M$. This very same idea was exploited in \cite{ssip_reconstruction} in order to obtain the analog of Rubin's theorem in the content of SSIP mentioned above. But in the presence of algebraicity the situation is considerably more complicated, and no established technology to deal with this problem was known to the author. The main contribution of this paper is the introduction of new techniques and ideas that are tailored exactly for this task (taking inspiration from our paper \cite{ssip_reconstruction}). We hope that these techniques will have applications also on other problems concerning $\omega$-categorical structures, e.g. in the analysis of reducts of homogeneous $\omega$-categorical structures, a topic which received considerable attention recently (see e.g. \cite{pinsker}).
	
	\medskip
	\noindent The structure of the paper is as follows. In Section 2 we introduce the necessary definitions and facts and do the groundwork, towards a proof of Theorem~\ref{main_theorem}. In Section~3 we focus on applications of our methods to the problem of description of outer automorphisms of automorphisms \mbox{groups of countable $\omega$-categorical structures.}

\section{The proof of the main theorem}

	In this section we prove Theorem~\ref{main_theorem}. First of all, in the next two items we introduce the basic definitions from permutation group theory that we need.
	
\smallskip

	\begin{notation} 
	\begin{enumerate}[(1)]
	\item We write $A \subseteq_\omega M$ to denote finite subsets.
	\item Given a structure $M$ and $A \subseteq M$, and considering $\mrm{Aut}(M) = G$ in its natural action on $M$, we denote the pointwise (resp. setwise) stabilizer of $A$ under this action by $G_{(A)}$ (resp. $G_{\{ A \}}$). 
	\item We denote the subgroup relation by $\leq$.
\end{enumerate}
\end{notation}

\begin{definition}\label{algebraicity} Let $M$ be a structure and $G = \mrm{Aut}(M)$.
	\begin{enumerate}[(1)]
	\item $a \in M$ is Galois-algebraic  over $A \subseteq M$ if the orbit of $a$ under $G_{(A)}$ is finite. 
	\item The {\em Galois-algebraic closure} of $A \subseteq M$, denoted as $\mrm{acl}^{\mrm{g}}_M(A)$, is the set of elements of $M$ which are Galois-algebraic over $A$.
	\item $a \in M$ is Galois-definable  over $A \subseteq M$ if the orbit of $a$ under $G_{(A)}$ is $\{a\}$. 
	\item The {\em Galois-definable closure} of $A \subseteq M$ in $M$, denoted as $\mrm{dcl}^{\mrm{g}}_M(A)$, is the set of elements of $M$ which are Galois-definable over $A$.
\end{enumerate}
\end{definition}

%\begin{definition} Let $M$ be a countable structure and $G = Aut(M)$.
%	\begin{enumerate}[(1)]
%	\item We say that $M$ (or $G$) has the {\em small index property} (SIP) if every subgroup of $Aut(M)$ of index less than $2^{\aleph_0}$ contains the pointwise stabilizer of a finite set $A \subseteq M$. 
%	\item We say that $M$ (or $G$) has the {\em strong small index property} (SSIP) if every subgroup of $Aut(M)$ of index less than $2^{\aleph_0}$ lies between the pointwise  and the setwise stabilizer of a finite set $A \subseteq M$.
%\end{enumerate}
%\end{definition}

	In the next hypothesis we isolate the abstract properties of oligomorphic groups that make our ``reconstruction proof'' work. We could have worked at the less general level of oligomorphic groups directly but we hope (and believe) that the added generality might be relevant for other applications of our methods.

	\begin{hypothesis}\label{hyp} Throughout this section, let $M$ be countable and such that:
	\begin{enumerate}[(1)]
	\item\label{hyp_1} $M$ has locally finite Galois-algebraicity, i.e., \mbox{for every $A \subseteq_\omega M$, $|\mrm{acl}^{\mrm{g}}_M(A)| < \aleph_0$;}
	\item\label{hyp_3} for every $K_1, K_2 \subseteq M$ which are finite and Galois-algebraically closed and for every isomorphism $f: K_1 \cong K_2$, there is an extension of $f$ to an automorphism of~$M$;
	\item\label{hyp_2} for every $H \leq G = \mrm{Aut}(M)$ which is open (in the $\mrm{Aut}(M)$ topology), there is a unique finite Galois-algebraically closed set $K \subseteq M$ such that:
$$G_{(K)} \leq H \leq G_{\{K\}}.$$
	%\item\label{hyp4} the following value $k_M$ is a well-defined finite cardinal (natural number):
	%$$k_M = \mrm{max} \{k < \omega : \exists a_1, ..., a_k \in M \text{ s.t. } \mrm{acl}^{\mrm{g}}_M(a_1) \subsetneq \cdots \subsetneq \mrm{acl}^{\mrm{g}}_M(a_k)\}.$$
	\end{enumerate}
\end{hypothesis} 

	The next definition is one of a technical nature, but an important one, in fact we need this for the statement of the main technical result of this paper, i.e., \ref{the_first_crucial_lemma}.

	\begin{notation}\label{kM_notation} Given $M$ as in \ref{hyp} we define $k_M$ as the following cardinal:
	$$k_M = \mrm{sup} \{k < \omega : \exists a_1, ..., a_k \in M \text{ s.t. } \mrm{acl}^{\mrm{g}}_M(a_1) \subsetneq \cdots \subsetneq \mrm{acl}^{\mrm{g}}_M(a_k)\}.$$
\end{notation}

	Notice that if $M$ is $\omega$-categorical, then the cardinal number $k_M$ from \ref{kM_notation} is finite. To see this, suppose not, then for every $k < \omega$ we can find $a_k$ and $(b^k_i : 1 \leq i < k)$ such that $\mrm{acl}^{\mrm{g}}_M(b^k_1) \subsetneq \cdots \mrm{acl}^{\mrm{g}}_M(b^k_{k-1}) \subsetneq \mrm{acl}^{\mrm{g}}_M(a_k)$, and so there is no bound on algebraic closures of singletons, which is impossible (cf. e.g. \cite[Fact~2.4]{hodges_al}).
	
	\begin{remark}\label{remark_cat} %Concerning Hypothesis \ref{hyp}, 
	\textrm{If $M$ is $\omega$-categorical and homogeneous, then $M$ satisfies items (\ref{hyp_1}) and (\ref{hyp_3}) of Hypothesis~\ref{hyp}. In particular, if $G$ is oligomorphic, then the canonical structure $M_G$ associated to $G$ obtained adding predicates naming orbits (cf. e.g. \cite[pg. 136]{hodges}) satisfies Hypothesis~\ref{hyp}(\ref{hyp_1})(\ref{hyp_3}). Furthermore, if on $M$ the operator $\mrm{acl}^{\mrm{g}}$ satisfies the condition $a \in \mrm{acl}^{\mrm{g}}(b) \setminus \mrm{acl}^{\mrm{g}}(\emptyset)$ implies $b \in \mrm{acl}^{\mrm{g}}(a)$, then $k_M \leq 2$ and it is equal to $1$ exactly when $G = G_{(\mrm{acl}(\emptyset))}$. In particular, if $\mrm{acl}^{\mrm{g}}_M(a) = \{a\}$ for every $a \in M$, then $k_M = 1$. Finally, as explained in the introduction, in the context of $\omega$-categorical structure Hypothesis~\ref{hyp}(\ref{hyp_2}) is exactly weak elimination of imaginaries. }% defines a pregeometry (cf. e.g. \cite[pg. 171]{hodges})
\end{remark}

	We now introduce what we refer to as the age of $M$ ($\mathbf{A}(M)$) and the {\em expanded} age of $M$ ($\mathbf{EA}(M)$), where the terminology takes inspiration from Fra\"iss{\'e} theory.

	\begin{notation} 
	\begin{enumerate}[(1)]
	\item We let $\mathbf{A}(M) = \{ \mrm{acl}^{\mrm{g}}_M(B) : B \subseteq_{\omega} M \}$.
	\item We let $\mathbf{EA}(M) = \{ (K, L) : K \in \mathbf{A}(M) \text{ and } L \leq \mrm{Aut}(K) \}$.
\end{enumerate}
\end{notation}  %We aim at defining what we will call the {\em expanded group of automorphisms of $M$}. %We first need some preparatory work. 

	One of the crucial ideas behind our reconstruction theorem is that, in a sense, all the information needed for reconstruction is encoded in the combinatorics of the lattice of Galois algebraically closed sets of $M$. But in order to argue toward smoothness of the topological isomorphism relation we have to cut the lattice of Galois algebraically closed sets of $M$ at each level $k < \omega$, where level is meant in terms of distance from the minimum element. \mbox{This is the point of the next definition.}

\begin{notation}\label{the_lattice notation} We let $\mathbf{A}_+(M)$ to be the set $\mathbf{A}(M)$ extended with an extra element $\mrm{dcl}^{\mrm{g}}_M(\emptyset)$ (of course if $\mrm{dcl}^{\mrm{g}}_M(\emptyset) = \mrm{acl}^{\mrm{g}}_M(\emptyset)$, then $\mathbf{A}_+(M) =\mathbf{A}(M)$). On $\mathbf{A}_+(M)$ we consider a lattice structure by letting $0 = \mrm{dcl}^{\mrm{g}}_M(\emptyset) \leq \mrm{acl}^{\mrm{g}}_M(\emptyset)$ be at the bottom of the lattice and by defining (as usual) $K_1 \wedge K_2 = K_1 \cap K_2$ and $K_1 \vee K_2 = \mrm{acl}^{\mrm{g}}(K_1 \cup K_2)$. For every $0 < k < \omega$, we define $\mathbf{A}_k(M)$ to be the set of $K \in \mathbf{A}(M)$ such that $\mrm{dcl}^{\mrm{g}}_M(\emptyset) \neq K$ and $K$ is at distance $\leq k$ from $0 = \mrm{dcl}^{\mrm{g}}_M(\emptyset)$ in the Hasse diagram of the lattice $(\mathbf{A}_+(M), \wedge, \vee)$ just defined. Also, by convention, we let $\mathbf{A}_\omega(M) = \mathbf{A}(M)$. 
\end{notation}

	In the next definition we introduce one of the most important definitions of the paper, we refer to the objects we introduce as {\em generalized pointwise stabilizers}.

	\begin{definition} Let $(K, L) \in \mathbf{EA}(M)$, we define:
	$$ G_{(K, L)} = \{ f \in \mrm{Aut}(M) : f \restriction K \in L \}.$$
\end{definition}

Notice that if $L = \{ \mrm{id}_K \}$, then  $G_{(K, L)} = G_{(K)}$, i.e., it equals the pointwise stabilizer of $K$, and that if $L = \mrm{Aut}(K)$, then $G_{(K, L)} = G_{\{ K \}}$, i.e., it equals the setwise stabilizer of $K$. That is why we referred to $ G_{(K, L)}$ as generalized pointwise stabilizers (and why we use $\mathcal{GS}(M)$ below). We introduce the following notation:
$$\mathcal{PS}(M) = \{ G_{(K)} : K \in \mathbf{A}(M) \} \; \text{ and } \; \mathcal{GS}(M) =\{ G_{(K, L)} : (K, L) \in \mathbf{EA}(M) \}.$$

\medskip

	One of the crucial ingredients of our proof is the following lemma, showing that under the assumptions from Hypothesis~\ref{hyp}, the open subgroups of $\mrm{Aut}(M) = G$ are exactly 
the generalized pointwise stabilizers that we introduced right above.

	\begin{lemma}\label{char_subgr_small_index} $\{ H \leq G : H \text{ is open} \} = \mathcal{GS}(M)$.
\end{lemma}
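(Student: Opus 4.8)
The claim is an equality of two sets of subgroups of $G = \mrm{Aut}(M)$, so I would prove the two inclusions separately, with the hard direction being $\{ H \leq G : H \text{ is open} \} \subseteq \mathcal{GS}(M)$.

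\smallskip

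\emph{The easy inclusion $\mathcal{GS}(M) \subseteq \{H \le G : H \text{ open}\}$.} Fix $(K,L) \in \mathbf{EA}(M)$, so $K$ is a finite Galois-algebraically closed set. Since $K$ is finite, $G_{(K)}$ is a basic open subgroup of $G$ in the permutation group topology, and it is a normal subgroup of the setwise stabilizer $G_{\{K\}}$ (the restriction map $G_{\{K\}} \to \mrm{Sym}(K)$ is a homomorphism with kernel $G_{(K)}$, and its image lies in $\mrm{Aut}(K)$ by Hypothesis~\ref{hyp}(\ref{hyp_3}), which guarantees that every automorphism of $K$ extends to $M$, hence the image is exactly $\mrm{Aut}(K)$). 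Now $G_{(K,L)}$ is, by definition, the preimage of $L \le \mrm{Aut}(K)$ under the restriction map $G_{\{K\}} \to \mrm{Aut}(K)$; equivalently, it is the union of finitely many cosets of $G_{(K)}$ (one for each element of $L$, lifted using Hypothesis~\ref{hyp}(\ref{hyp_3})). A union of cosets of an open subgroup is open, and it is a subgroup precisely because $L$ is a subgroup; this gives $G_{(K,L)} \in \{H \le G : H \text{ open}\}$.

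\smallskip

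\emph{The hard inclusion.} Let $H \le G$ be open. Then $H$ contains some basic open subgroup, i.e.\ $G_{(A)} \le H$ for a finite $A \subseteq M$; replacing $A$ by $\mrm{acl}^{\mrm{g}}_M(A)$, which is still finite by Hypothesis~\ref{hyp}(\ref{hyp_1}) and has the same pointwise stabilizer, we may assume $A$ is Galois-algebraically closed. Now apply Hypothesis~\ref{hyp}(\ref{hyp_2}) to $H$: there is a (unique) finite Galois-algebraically closed $K \subseteq M$ with $G_{(K)} \le H \le G_{\{K\}}$. The plan is to set $L := \{ f \restriction K : f \in H \}$ and show $H = G_{(K,L)}$. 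First, $L$ is well-defined as a subset of $\mrm{Sym}(K)$ because $H \le G_{\{K\}}$, and it actually lands in $\mrm{Aut}(K)$ (each $f \in H \le \mrm{Aut}(M)$ restricts to an embedding $K \to M$ with image $K$, hence an automorphism of $K$); $L$ is a subgroup since $H$ is. This gives $(K,L) \in \mathbf{EA}(M)$, so $G_{(K,L)}$ makes sense. The inclusion $H \subseteq G_{(K,L)}$ is immediate from the definition of $L$. For the reverse, take $g \in G_{(K,L)}$, so $g \restriction K = f \restriction K$ for some $f \in H$; then $f^{-1}g \in G_{(K)} \le H$, hence $g = f(f^{-1}g) \in H$. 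This closes the inclusion and hence the lemma.

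\smallskip

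\textbf{Main obstacle.} The genuinely load-bearing step is the use of Hypothesis~\ref{hyp}(\ref{hyp_2}) (the Galois-sandwich / weak elimination of imaginaries condition) to produce the Galois-algebraically closed set $K$ sandwiched between $G_{(K)}$ and $G_{\{K\}}$; everything else is bookkeeping about cosets and restriction maps. One subtlety worth checking carefully is that $L = \{f \restriction K : f \in H\}$ is literally a subgroup of $\mrm{Aut}(K)$ rather than just a subset of $\mrm{Sym}(K)$ — this is where Hypothesis~\ref{hyp}(\ref{hyp_3}) is implicitly used to identify the image of the restriction homomorphism $G_{\{K\}} \to \mrm{Sym}(K)$ with all of $\mrm{Aut}(K)$, so that $L \le \mrm{Aut}(K)$ and the pair $(K,L)$ genuinely lies in $\mathbf{EA}(M)$. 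A second point to be careful about is the reduction "replace $A$ by $\mrm{acl}^{\mrm{g}}_M(A)$": one should note $G_{(\mrm{acl}^{\mrm{g}}_M(A))} = G_{(A)}$ — indeed $\supseteq$ is trivial and $\subseteq$ holds because no nontrivial permutation of a finite algebraic-closure orbit fixing $A$ is realized... actually the cleaner statement is just that $G_{(A)}$ already is open and the sandwich hypothesis applies directly to $H \supseteq G_{(A)}$, with $K$ not necessarily containing $A$; so in fact this reduction is a convenience rather than a necessity, and I would phrase the argument to apply Hypothesis~\ref{hyp}(\ref{hyp_2}) directly to $H$.
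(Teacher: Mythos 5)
Your proof is correct and follows essentially the same route as the paper: apply Hypothesis~\ref{hyp}(\ref{hyp_2}) to the open subgroup $H$ to get the sandwiched Galois-closed $K$, set $L = \{f \restriction K : f \in H\}$, and verify $H = G_{(K,L)}$ (the paper phrases the last step via the correspondence theorem applied to $G_{\{K\}}/G_{(K)} \cong \mrm{Aut}(K)$, you do the equivalent coset chase directly). One small remark: the aside $G_{(\mrm{acl}^{\mrm{g}}_M(A))} = G_{(A)}$ is actually false in general (fixing $A$ pointwise only fixes $\mrm{acl}^{\mrm{g}}_M(A)$ setwise, not pointwise), but you correctly recognize this reduction is unnecessary and route around it by applying the sandwich hypothesis directly to $H$.
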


	\begin{proof} The containment from right to left is trivial. Let $H \leq G$ be open. By \ref{hyp}(\ref{hyp_2}), there is a unique smallest finite Galois-algebraically closed set $K \subseteq M$ such that $G_{(K)} \leq H \leq G_{\{K\}}$. Firstly, we claim that $G_{(K)} \trianglelefteq G_{\{ K \}}$. In fact, for $g \in G_{\{ K \}}$, $h \in G_{(K)}$ and $a \in K$, we have $ghg^{-1}(a) = gg^{-1}(a) = a$, since $g^{-1}(a) \in K$ and $h \in G_{(K)}$. Secondly, for $g, h \in G_{\{ K \}}$, we have $g^{-1}h \in G_{(K)}$ iff $g \restriction K = h \restriction K$. Hence, the map $f: gG_{(K)} \mapsto g \restriction K$, for $g \in G_{\{ K \}}$, is such that: 
	\begin{equation}\tag{$\star$}\label{equation_label}
	f:G_{\{ K \}} / G_{(K)}  \cong  \mrm{Aut}(K),
\end{equation}
since every $f \in \mrm{Aut}(K)$ extends to an automorphism of $M$ (recall \ref{hyp}(\ref{hyp_3})). Thus, by the fourth isomorphism theorem we have $H = G_{(K, L)}$ for $L = \{ f \restriction K : f \in H \}$.
\end{proof}

	With Lemma~\ref{char_subgr_small_index} at hand we know much but we want to do more, i.e., we want to give an algebraic characterization of the pointwise stabilizers among the generalized pointwise stabilizers. This is the content of the next two propositions.

	\begin{proposition}\label{normality_prop} Let $H_1, H_2 \in \mathcal{GS}(M)$. The following conditions are equivalent:
	\begin{enumerate}[(1)]
	\item $H_1 \trianglelefteq H_2$ and $[H_2: H_1] < \omega$;
	\item there is $K \in \mathbf{A}(M)$ and $L_1 \trianglelefteq L_2 \leq \mrm{Aut}(K)$ such that $H_i = G_{(K, L_i)}$ for $i =1,2$.
\end{enumerate}
\end{proposition}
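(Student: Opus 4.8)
The plan is to prove the equivalence by showing $(2) \Rightarrow (1)$ directly and $(1) \Rightarrow (2)$ by first locating a common Galois-algebraically closed set $K$ for both subgroups and then translating the group-theoretic relationship between $H_1$ and $H_2$ into the required relationship between the corresponding subgroups $L_1, L_2 \leq \mrm{Aut}(K)$.

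For the easy direction $(2) \Rightarrow (1)$: suppose $H_i = G_{(K, L_i)}$ with $L_1 \trianglelefteq L_2 \leq \mrm{Aut}(K)$. Consider the restriction homomorphism $\rho \colon G_{\{K\}} \to \mrm{Aut}(K)$, $g \mapsto g \restriction K$, which by $(\star)$ in the proof of Lemma~\ref{char_subgr_small_index} is surjective with kernel $G_{(K)}$. Then $H_i = \rho^{-1}(L_i)$, and since preimages under a surjective homomorphism preserve normality and index, $H_1 \trianglelefteq H_2$ with $[H_2 : H_1] = [L_2 : L_1] \leq |\mrm{Aut}(K)| < \omega$ (finite because $K$ is finite).

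For the harder direction $(1) \Rightarrow (2)$: start with $H_i = G_{(K_i, L_i')}$ for some $K_i \in \mathbf{A}(M)$ and $L_i' \leq \mrm{Aut}(K_i)$, using Lemma~\ref{char_subgr_small_index} (each $H_i$ is open, being in $\mathcal{GS}(M)$). The first task is to show $K_1 = K_2$; call this common set $K$. The idea is that $H_1 \trianglelefteq H_2$ with finite index forces $G_{(K_1)}$ and $G_{(K_2)}$ to be "commensurable" in a way that pins down the canonical $K$ from Hypothesis~\ref{hyp}(\ref{hyp_2}): since $G_{(K_i)} \leq H_i \leq G_{\{K_i\}}$ and $K_i$ is the \emph{unique} such set, one argues that $K_1$ and $K_2$ must coincide. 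Concretely, from $H_1 \leq H_2$ and finite index one deduces that the "core" of each — the largest normal-in-$G$ open subgroup contained in $H_i$, or more simply the intersection of $G$-conjugates — is the same pointwise stabilizer; alternatively, one shows $G_{(K_1)}$ and $G_{(K_2)}$ have a common open subgroup of finite index in each, forcing $\mrm{acl}^{\mrm{g}}_M(K_1) = \mrm{acl}^{\mrm{g}}_M(K_2)$, and since both are already Galois-algebraically closed, $K_1 = K_2 =: K$. Once $K$ is fixed, set $L_i = \rho(H_i) = \{ f \restriction K : f \in H_i \} \leq \mrm{Aut}(K)$, so that $H_i = G_{(K, L_i)}$. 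Because $\rho$ is surjective with kernel $G_{(K)} \leq H_1 \leq H_2$, the correspondence theorem gives $L_1 \trianglelefteq L_2$ directly from $H_1 \trianglelefteq H_2$, completing the proof.

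The main obstacle is the step forcing $K_1 = K_2$. Without it the problem is trivial, but $H_1 \trianglelefteq H_2$ does not obviously say anything about the canonical sets associated to the two subgroups by Hypothesis~\ref{hyp}(\ref{hyp_2}). The cleanest route is probably: $H_1$ open and of finite index in $H_2$ implies $H_1$ and $H_2$ have the same closure properties, and in particular $G_{(K_1)}$, being open and contained in $H_1 \leq H_2$, satisfies $G_{(K_1)} \leq H_2 \leq G_{\{K_2\}}$; applying Hypothesis~\ref{hyp}(\ref{hyp_2}) to $H_2$ we learn $K_2$ is the unique Galois-algebraically closed set with $G_{(K_2)} \leq H_2 \leq G_{\{K_2\}}$, and one needs to show $K_1$ also satisfies $G_{(K_1)} \leq H_2 \leq G_{\{K_1\}}$ — the first inclusion is clear, so the crux is $H_2 \leq G_{\{K_1\}}$, i.e., that every element of $H_2$ setwise-fixes $K_1$. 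This follows because $K_1 = \mrm{acl}^{\mrm{g}}_M$ of any orbit-representative set for $G_{(K_1)}$, and finite index of $H_1 = G_{(K_1, L_1')}$ in $H_2$ means $H_2$ permutes the finitely many $G_{(K_1)}$-orbits making up $K_1$, hence preserves $K_1$ setwise. Uniqueness in Hypothesis~\ref{hyp}(\ref{hyp_2}) then yields $K_1 = K_2$.
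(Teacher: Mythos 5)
Your $(2) \Rightarrow (1)$ direction is correct, and your overall plan for $(1) \Rightarrow (2)$ — first force $K_1 = K_2$ and then use the correspondence theorem under the restriction map $\rho\colon G_{\{K\}}\to \mrm{Aut}(K)$ to get $L_1 \trianglelefteq L_2$ — is a genuinely more structural approach than the paper's, which instead proves $K_2\subseteq K_1$, $K_1\subseteq K_2$, $L_1\leq L_2$ and $L_1\trianglelefteq L_2$ one at a time by constructing explicit witnesses (e.g.\ a sequence $f_n$ pointwise fixing $K_2$ with pairwise disjoint images of $K_1\setminus K_2$ to contradict finite index). Your use of Hypothesis~\ref{hyp}(\ref{hyp_2}) uniqueness to collapse the two candidate sets is a clean idea.

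However, the crux step — showing $H_2 \leq G_{\{K_1\}}$ — is not actually justified by what you wrote. The claim ``finite index of $H_1$ in $H_2$ means $H_2$ permutes the finitely many $G_{(K_1)}$-orbits making up $K_1$, hence preserves $K_1$ setwise'' is circular: the $G_{(K_1)}$-orbits inside $K_1$ are singletons (since $G_{(K_1)}$ fixes $K_1$ pointwise), so ``$H_2$ permutes them'' is precisely the assertion $H_2\leq G_{\{K_1\}}$ that you are trying to prove; and if instead you mean the finite $G_{(K_1)}$-orbits on all of $M$, then for $H_2$ to permute those you would need $H_2$ to normalize $G_{(K_1)}$, which has not been established. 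The earlier alternatives (the ``core'' remark, and the claim that $G_{(K_1)}$ and $G_{(K_2)}$ share a finite-index open subgroup) are likewise unsupported — e.g.\ $[G_{(K_1)}:G_{(K_1\cup K_2)}]$ need not be finite without already knowing $K_1=K_2$. The ingredient you should invoke here is normality, not finite index: for $g\in H_2$ we have $gH_1g^{-1}=H_1$, hence $G_{(g(K_1))}=gG_{(K_1)}g^{-1}\leq H_1\leq gG_{\{K_1\}}g^{-1}=G_{\{g(K_1)\}}$, and since $g(K_1)$ is finite Galois-algebraically closed, uniqueness in Hypothesis~\ref{hyp}(\ref{hyp_2}) applied to $H_1$ gives $g(K_1)=K_1$. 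That yields $H_2\leq G_{\{K_1\}}$, whence the sandwich $G_{(K_1)}\leq H_2\leq G_{\{K_1\}}$ and, by uniqueness applied to $H_2$, the desired $K_1=K_2$. (Alternatively, an orbit-finiteness argument recovers the paper's two inclusions: $K_2\subseteq K_1$ because for $a\in K_2$ the orbit $G_{(K_1)}a\subseteq H_2a\subseteq K_2$ is finite, and $K_1\subseteq K_2$ because for $a\in K_1$ the orbit $H_2a=\bigcup_{i}h_i(H_1a)$ is a finite union of subsets of finitely many translates of $K_1$, so $G_{(K_2)}a$ is finite.) With one of these substituted for your heuristic, your argument goes through.
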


	\begin{proof} Concerning ``(2) implies (1)'', by the normality of $L_1$ in $L_2$ we have that, for $g \in G_{(K, L_2)}$ and $h \in G_{(K, L_1)}$, $g h g^{-1} \restriction K \in L_1$, while the fact that $[H_2: H_1] < \omega$ follows from the proof of Lemma \ref{char_subgr_small_index}. We now show that ``(1) implies (2)''. By assumption, $H_i = G_{(K_i, L_i)}$ for $(K_i, L_i) \in \mathbf{EA}(M)$ ($i =1,2$). 
	\begin{enumerate}[$(*)_1$]
	\item $K_2 \subseteq K_1$.
\end{enumerate}
Suppose not, and let $a \in K_2 - K_1$ witness this. Then we can find $f \in G$ such that $f \restriction K_1 = \mrm{id}_{K_1}$ and $f(a) \not\in K_2$. It follows that $f \in H_1 - H_2$, a contradiction.
\begin{enumerate}[$(*)_2$]
	\item $K_1 \subseteq K_2$.
\end{enumerate}
Suppose not, as $K_2$ is Galois-algebraically closed and $M$ is infinite, we can find $f_n \in G$, for $n < \omega$, such that $f_n \restriction K_2 = \mrm{id}_{K_2}$, and in addition the sets $\{ f_n(K_1 - K_2) : n < \omega \}$ are pairwise disjoint. Then clearly, for every $n < \omega$, $f_n \in H_2$ and $\{ f_nH_1 : n < \omega \}$ are distinct, contradicting the assumption $[H_2: H_1] < \omega$.
\begin{enumerate}[$(*)_3$]
	\item $L_1 \leq L_2$.
\end{enumerate}
Suppose not, and let $h \in L_1 - L_2$. Then $h$ extends to an automorphism $f$ of $M$. Clearly $f \in H_1 - H_2$, a contradiction.
\begin{enumerate}[$(*)_4$]
	\item $L_1 \trianglelefteq L_2$.
\end{enumerate}
Suppose not, and let $g_i \in L_i$ ($i = 1, 2$) be such that $g_2g_1g_2^{-1} \not\in L_1$. Then $g_i$ extends to an automorphism $f_i$ of $M$ ($i = 1, 2$). Clearly $f_i \in H_i$ ($i = 1, 2$), and $f_2f_1f_2^{-1} \not\in H_1$, a contradiction.
\end{proof}

	\begin{proposition}\label{char_point_stab}
	$$\mathcal{PS}(M) = \{ H \in \mathcal{GS}(M) : \not\!\exists H' \in \mathcal{GS}(M) \text{ with } H' \subsetneq H, H' \trianglelefteq H \text{ and } [H : H'] < \omega \}.$$
\end{proposition}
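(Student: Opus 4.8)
The plan is to deduce the proposition from Proposition~\ref{normality_prop}, which translates the ``proper finite-index normal subgroup in $\mathcal{GS}(M)$'' relation into the purely finite statement that the two subgroups share a representation over a common $K$ with $L_1 \trianglelefteq L_2$. The underlying slogan is that $G_{(K, L)} \in \mathcal{PS}(M)$ exactly when $L = \{\mrm{id}_K\}$, and that a representation $G_{(K,L)}$ admits a smaller normal finite-index generalized stabilizer precisely when $L$ is nontrivial (its trivial subgroup then being a proper normal subgroup). Throughout I will use, as in the proof of Lemma~\ref{char_subgr_small_index}, that the representation of an open subgroup as $G_{(K,L)}$ is unique: if $G_{(K)} \leq H \leq G_{\{K\}}$ and $G_{(K')} \leq H \leq G_{\{K'\}}$ then $K = K'$ by the uniqueness clause of Hypothesis~\ref{hyp}(\ref{hyp_2}), and then $L$ is determined as $\{f\restriction K : f\in H\}$, the equality $L = \{f\restriction K : f\in H\}$ using Hypothesis~\ref{hyp}(\ref{hyp_3}).

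For the inclusion $\mathcal{PS}(M) \subseteq \{\dots\}$, I would take $H = G_{(K)}$ with $K \in \mathbf{A}(M)$ and suppose toward a contradiction that some $H' \in \mathcal{GS}(M)$ satisfies $H' \subsetneq H$, $H' \trianglelefteq H$ and $[H : H'] < \omega$. Applying the implication (1)$\Rightarrow$(2) of Proposition~\ref{normality_prop} yields a single $\bar K \in \mathbf{A}(M)$ and $L_1 \trianglelefteq L_2 \leq \mrm{Aut}(\bar K)$ with $H' = G_{(\bar K, L_1)}$ and $H = G_{(\bar K, L_2)}$. Since $H$ is sandwiched both as $G_{(K)} \leq H \leq G_{\{K\}}$ and as $G_{(\bar K)} \leq H \leq G_{\{\bar K\}}$, Hypothesis~\ref{hyp}(\ref{hyp_2}) forces $\bar K = K$; but then $H' = G_{(K, L_1)} \supseteq G_{(K)} = H$ because $\mrm{id}_K \in L_1$, contradicting $H' \subsetneq H$.

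For the reverse inclusion I would argue contrapositively. Let $H = G_{(K, L)} \in \mathcal{GS}(M)$ with $H \notin \mathcal{PS}(M)$; then $L \neq \{\mrm{id}_K\}$, for otherwise $H = G_{(K)} \in \mathcal{PS}(M)$. Set $H' := G_{(K)} = G_{(K, \{\mrm{id}_K\})} \in \mathcal{GS}(M)$. Since $\{\mrm{id}_K\} \trianglelefteq L \leq \mrm{Aut}(K)$, the implication (2)$\Rightarrow$(1) of Proposition~\ref{normality_prop} gives $H' \trianglelefteq H$ with $[H : H'] < \omega$. Finally $H' \subsetneq H$: choosing $g \in L \setminus \{\mrm{id}_K\}$, which extends by Hypothesis~\ref{hyp}(\ref{hyp_3}) to an automorphism $f$ of $M$, we get $f \in H \setminus H'$. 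Hence $H$ is not in the right-hand side, as desired.

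I do not anticipate a genuine difficulty: the statement is essentially a bookkeeping corollary of Proposition~\ref{normality_prop}. The one point demanding care is that the decomposition $H = G_{(K, L)}$ is not a priori canonical, so in the first inclusion one must not conflate the set $\bar K$ returned by Proposition~\ref{normality_prop} with the originally chosen $K$; the identification $\bar K = K$ is exactly where the uniqueness in Hypothesis~\ref{hyp}(\ref{hyp_2}) is indispensable. Once that is in place, the remainder is the trivial group-theoretic remark distinguishing trivial from nontrivial finite groups.
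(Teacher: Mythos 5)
Your proof is correct and follows essentially the same route as the paper: both directions reduce to Proposition~\ref{normality_prop} together with the uniqueness clause of Hypothesis~\ref{hyp}(\ref{hyp_2}), and the reverse inclusion is the same contrapositive argument with $H' = G_{(K, \{\mrm{id}_K\})}$. The only cosmetic difference is in the forward inclusion, where the paper deduces $L_1 = L_2 = \{\mrm{id}_K\}$ while you observe directly that $\mrm{id}_K \in L_1$ forces $H' \supseteq G_{(K)} = H$; these are the same argument phrased slightly differently.
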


	\begin{proof} First we show the containment from left to right. Let $H_2 \in \mathcal{PS}(M)$ and assume that there exists $H_1 \in \mathcal{GS}(M)$ such that $H_1 \subsetneq H_2, H_1 \trianglelefteq H_2 \text{ and } [H_2: H_1] < \omega$. By Proposition \ref{normality_prop}, $H_i = G_{(K_i, L_i)}$ for $(K_i, L_i) \in \mathbf{EA}(M)$ ($i =1,2$) and $K_1 = K = K_2$. Now, as $H_2 \in \mathcal{PS}(M)$, $L_2 = \{ \mrm{id}_K \}$. Hence, $L_1 = L_2$, and so $H_1 = H_2$, a contradiction. We now show the containment from right to left. Let $H \in \mathcal{G}_2$, then $H = G_{(K, L)}$ for $(K, L) \in \mathbf{EA}(M)$. If $L \neq \{ \mrm{id}_K \}$ then letting $H' = G_{(K, \{ \mrm{id}_K \})}$ we have $H' \subsetneq H$, $H' \trianglelefteq H$ and $[H : H'] < \omega$, a contradiction.
\end{proof}

	Once we have characterized $\mathcal{PS}(M)$ in $\mathcal{GS}(M)$, it is immediate to characterize those elements of $\mathcal{PS}(M)$ which correspond to $\mathbf{A}_k(M)$, recalling Notation~\ref{the_lattice notation}.

	\begin{proposition}\label{char_point_stab_singletons} For $0 < k < \omega$, we let
	$\mathcal{PS}_k(M)$ be the set of  $H \in \mathcal{PS}(M)$ such that $H \neq G$ and any chain of elements of $\mathcal{PS}(M)$ starting at $H$ and arriving at $G$ has length $\leq k$. Also, by convention, we let $\mathcal{PS}_\omega(M) = \mathcal{PS}(M)$. Then, recalling the notation introduced in \ref{the_lattice notation} (so $\mathbf{A}_\omega(M) = \mathbf{A}(M)$), we have the following:
	$$\mathcal{PS}_k(M) = \{G_{(K)} : K \in \mathbf{A}_k(M) \}.$$
\end{proposition}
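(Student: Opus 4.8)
The plan is to use Proposition~\ref{char_point_stab} to translate the statement entirely into the combinatorics of the lattice $(\mathbf{A}_+(M), \wedge, \vee)$. By that proposition, $\mathcal{PS}(M) = \{G_{(K)} : K \in \mathbf{A}(M)\}$, and the correspondence $K \mapsto G_{(K)}$ is a bijection between $\mathbf{A}(M)$ and $\mathcal{PS}(M)$. The first thing I would establish is that this bijection is order-reversing for the natural orders: for $K, K' \in \mathbf{A}(M)$ we have $G_{(K)} \leq G_{(K')}$ if and only if $K' \subseteq K$. The direction ``$K' \subseteq K \Rightarrow G_{(K)} \leq G_{(K')}$'' is immediate. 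For the converse, if $K' \not\subseteq K$ pick $a \in K' - K$; since $K$ is Galois-algebraically closed, by Hypothesis~\ref{hyp}(\ref{hyp_3}) (applied to the identity on $K$, which is an isomorphism $K \cong K$ extending to something moving $a$, using that the orbit of $a$ under $G_{(K)}$ is infinite) there is $f \in G_{(K)}$ with $f(a) \neq a$, so $f \notin G_{(K')}$. This is exactly the kind of argument already used in $(*)_1$ of the proof of Proposition~\ref{normality_prop}, so it is routine.

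Given the order-reversing bijection, a chain $G_{(K)} = H_0 \subsetneq H_1 \subsetneq \cdots \subsetneq H_n = G$ in $\mathcal{PS}(M)$ corresponds exactly to a chain $K = K_0 \supsetneq K_1 \supsetneq \cdots \supsetneq K_n$ in $\mathbf{A}(M)$ with $K_n = G_{(K_n)}^{-1}$-image of $G$; but $G = G_{(K)}$ holds precisely when $K \subseteq \mrm{dcl}^{\mrm{g}}_M(\emptyset)$, equivalently $G_{(K)} = G$ corresponds to the bottom element $0$ of $\mathbf{A}_+(M)$ (here one must be slightly careful: $G = G_{(\mrm{acl}^{\mrm{g}}_M(\emptyset))} = G_{(\mrm{dcl}^{\mrm{g}}_M(\emptyset))}$, and in $\mathbf{A}(M)$ the relevant bottom element is $\mrm{acl}^{\mrm{g}}_M(\emptyset)$ while in $\mathbf{A}_+(M)$ it is $\mrm{dcl}^{\mrm{g}}_M(\emptyset)$; since $\mathbf{A}_k(M)$ is defined in terms of distance in the Hasse diagram of $\mathbf{A}_+(M)$ from $0 = \mrm{dcl}^{\mrm{g}}_M(\emptyset)$, and $\mrm{dcl}^{\mrm{g}}_M(\emptyset) \le \mrm{acl}^{\mrm{g}}_M(\emptyset)$ is a covering relation there, the distance from $0$ to any $K \neq \mrm{dcl}^{\mrm{g}}_M(\emptyset)$ equals $1$ plus the distance from $\mrm{acl}^{\mrm{g}}_M(\emptyset)$ to $K$ within $\mathbf{A}(M)$, which matches the length of a maximal chain from $G_{(K)}$ down to $G$ in $\mathcal{PS}(M)$). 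Thus the condition defining $\mathcal{PS}_k(M)$ — that $H \neq G$ and every chain in $\mathcal{PS}(M)$ from $H$ to $G$ has length $\leq k$ — translates into: $K \neq \mrm{dcl}^{\mrm{g}}_M(\emptyset)$ and every chain in $\mathbf{A}_+(M)$ from $K$ down to $0$ has length $\leq k$, which is exactly the defining condition of $\mathbf{A}_k(M)$ in Notation~\ref{the_lattice notation}. Both inclusions then follow, and the $k = \omega$ case is the trivial convention $\mathcal{PS}_\omega(M) = \mathcal{PS}(M) = \{G_{(K)} : K \in \mathbf{A}(M)\} = \{G_{(K)} : K \in \mathbf{A}_\omega(M)\}$.

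The main obstacle, and the only genuinely delicate point, is the bookkeeping around the two possible bottom elements $\mrm{dcl}^{\mrm{g}}_M(\emptyset)$ and $\mrm{acl}^{\mrm{g}}_M(\emptyset)$: one has to check that ``length of a chain in $\mathcal{PS}(M)$'' as used in the definition of $\mathcal{PS}_k(M)$ and ``distance in the Hasse diagram of $\mathbf{A}_+(M)$'' as used in the definition of $\mathbf{A}_k(M)$ are normalized the same way (off-by-one errors are easy here), and that when $\mrm{dcl}^{\mrm{g}}_M(\emptyset) = \mrm{acl}^{\mrm{g}}_M(\emptyset)$ everything still lines up since then $\mathbf{A}_+(M) = \mathbf{A}(M)$. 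I would also double-check that $G_{(K)} = G$ forces $K \subseteq \mrm{dcl}^{\mrm{g}}_M(\emptyset)$ and hence $\mrm{acl}^{\mrm{g}}_M(K) = \mrm{acl}^{\mrm{g}}_M(\emptyset)$, so that the element of $\mathbf{A}(M)$ corresponding to $G$ is indeed unique and equals $\mrm{acl}^{\mrm{g}}_M(\emptyset)$ — this uses locally finite Galois-algebraicity (Hypothesis~\ref{hyp}(\ref{hyp_1})) only implicitly, to know these closures are finite and the lattice is well-defined. Everything else is a direct unwinding of definitions.
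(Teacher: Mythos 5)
Your proposal fleshes out, in detail, exactly what the paper dismisses in one line ("This is obvious, recalling \ref{char_point_stab}"), and the core argument — that $K \mapsto G_{(K)}$ is an order-reversing bijection of $\mathbf{A}(M)$ onto $\mathcal{PS}(M)$, under which chain lengths in $\mathcal{PS}(M) \cup \{G\}$ match distances from $0$ in $\mathbf{A}_+(M)$ — is the intended one and is sound. You are also right that the delicate point is the off-by-one at the bottom, which is precisely why the paper introduces the extra element $\mrm{dcl}^{\mrm{g}}_M(\emptyset)$ in $\mathbf{A}_+(M)$.

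There is one misstatement in your parenthetical, though, which you should fix: you write $G = G_{(\mrm{acl}^{\mrm{g}}_M(\emptyset))} = G_{(\mrm{dcl}^{\mrm{g}}_M(\emptyset))}$. The second equality is correct, but the first holds \emph{only} when $\mrm{acl}^{\mrm{g}}_M(\emptyset) = \mrm{dcl}^{\mrm{g}}_M(\emptyset)$. If there is nontrivial algebraicity over $\emptyset$, then $G_{(\mrm{acl}^{\mrm{g}}_M(\emptyset))}$ is a proper open subgroup of $G$, and in fact no $K \in \mathbf{A}(M)$ satisfies $G_{(K)} = G$ (every such $K$ contains $\mrm{acl}^{\mrm{g}}_M(\emptyset)$), so $G \notin \mathcal{PS}(M)$. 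This is exactly why the bottom of $\mathbf{A}_+(M)$ must be $\mrm{dcl}^{\mrm{g}}_M(\emptyset)$ and not $\mrm{acl}^{\mrm{g}}_M(\emptyset)$: under the order-reversing correspondence, $G$ itself pairs with $\mrm{dcl}^{\mrm{g}}_M(\emptyset)$, and the extra covering relation $\mrm{dcl}^{\mrm{g}}_M(\emptyset) < \mrm{acl}^{\mrm{g}}_M(\emptyset)$ accounts for the final step $G_{(\mrm{acl}^{\mrm{g}}_M(\emptyset))} \subsetneq G$ in any chain "arriving at $G$". Your subsequent sentence (about the covering relation adding $1$ to the distance) already makes this correct, so the fix is simply to delete the false equality $G = G_{(\mrm{acl}^{\mrm{g}}_M(\emptyset))}$ and phrase it as: a chain in $\mathcal{PS}(M)$ from $G_{(K)}$ arriving at $G$ corresponds to a chain in $\mathbf{A}_+(M)$ from $K$ down to $\mrm{dcl}^{\mrm{g}}_M(\emptyset)$, where the penultimate element on the lattice side is $\mrm{acl}^{\mrm{g}}_M(\emptyset)$ whenever that differs from $\mrm{dcl}^{\mrm{g}}_M(\emptyset)$.

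Two smaller remarks. First, the injectivity argument does not really need Hypothesis~\ref{hyp}(\ref{hyp_3}); that $a \notin K = \mrm{acl}^{\mrm{g}}_M(K)$ means the $G_{(K)}$-orbit of $a$ is infinite is just the definition of Galois-algebraic closure, so some $f \in G_{(K)}$ moves $a$. Second, for the chain-length translation to be literally an equality one is implicitly reading ``at distance $\leq k$ from $0$ in the Hasse diagram'' as ``the longest chain from $0$ to $K$ has length $\leq k$'' (i.e.\ height), matching the ``every chain has length $\leq k$'' phrasing in the definition of $\mathcal{PS}_k(M)$; it is worth stating this convention explicitly, since ``distance in a Hasse diagram'' could otherwise be misread as shortest-path distance.
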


	\begin{proof} This is obvious, recalling \ref{char_point_stab}.
\end{proof}

	The following proposition is not needed for the proof of Theorem~\ref{main_theorem} but we believe that it is of independent interest and also it will be used in Section 3.

\begin{proposition}\label{char_Lpoint_stab} Let $L$ be a finite group and $H \in \mathcal{PS}(M)$. Then TFAE:
	\begin{enumerate}[(1)]
	\item $H = G_{(K)}$ and $\mrm{Aut}(K) \cong L$;
	\item there is $H' \in \mathcal{GS}(M)$ such that $H \trianglelefteq H'$, $[H': H] < \omega$, $H'$ is maximal under these conditions and $H'/H \cong L$.
\end{enumerate}
\end{proposition}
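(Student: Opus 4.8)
The plan is to prove the two implications separately, using Proposition~\ref{char_point_stab} and the group isomorphism $(\star)$ from the proof of Lemma~\ref{char_subgr_small_index}. First assume (1), so $H = G_{(K)}$ with $\mrm{Aut}(K) \cong L$. Set $H' = G_{\{K\}} = G_{(K, \mrm{Aut}(K))}$. By the normality claim inside the proof of Lemma~\ref{char_subgr_small_index} we have $G_{(K)} \trianglelefteq G_{\{K\}}$, and $(\star)$ gives $H'/H \cong \mrm{Aut}(K) \cong L$, which in particular is finite, so $[H':H] < \omega$. It remains to check maximality: if $H'' \in \mathcal{GS}(M)$ with $H \trianglelefteq H''$ and $[H'':H] < \omega$, then $H'' = G_{(K'', L'')}$ and applying Proposition~\ref{normality_prop} to the pair $H \trianglelefteq H''$ (both in $\mathcal{GS}(M)$, finite index) forces $K'' = K$, hence $L'' \leq \mrm{Aut}(K)$ and $H'' \leq G_{(K, \mrm{Aut}(K))} = H'$. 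So $H'$ is the (unique) maximal such subgroup, giving (2).

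For the converse, assume (2). Since $H \in \mathcal{PS}(M)$ we have $H = G_{(K)}$ for some $K \in \mathbf{A}(M)$; the content to extract is that $\mrm{Aut}(K) \cong L$. Take the witnessing $H' \in \mathcal{GS}(M)$, say $H' = G_{(K', L')}$. Because $H \trianglelefteq H'$ and $[H':H] < \omega$, Proposition~\ref{normality_prop} applies and yields $K' = K$; thus $H' = G_{(K, L')}$ with $\{\mrm{id}_K\} \trianglelefteq L' \leq \mrm{Aut}(K)$, and by the argument in the proof of Proposition~\ref{normality_prop} (or directly from $(\star)$ restricted to the subgroup $H'/G_{(K)}$) we get $H'/H \cong L'$. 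Now maximality of $H'$: the subgroup $G_{(K, \mrm{Aut}(K))} = G_{\{K\}}$ contains $H'$, still contains $H$ normally with finite index (again by Lemma~\ref{char_subgr_small_index}'s proof, $[G_{\{K\}}:G_{(K)}] = |\mrm{Aut}(K)| < \omega$ by Hypothesis~\ref{hyp}(\ref{hyp_1}) applied to the finite set $K$), so maximality of $H'$ forces $H' = G_{\{K\}}$ and hence $L' = \mrm{Aut}(K)$. Therefore $L \cong H'/H \cong \mrm{Aut}(K)$, which is (1).

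I expect the only genuinely delicate point to be pinning down the isomorphism $H'/H \cong L'$ when $H' = G_{(K, L')}$, i.e.\ making precise that the restriction map $f \mapsto f\restriction K$ descends to a group isomorphism $G_{(K, L')}/G_{(K)} \cong L'$; this is exactly $(\star)$ cut down to the preimage of $L'$ under $f$, so it is routine but should be stated cleanly. Everything else is a direct bookkeeping application of Propositions~\ref{normality_prop} and~\ref{char_point_stab} together with the finiteness $|\mrm{Aut}(K)| < \omega$ coming from $K \in \mathbf{A}(M)$ being finite (Hypothesis~\ref{hyp}(\ref{hyp_1})). One should also remark that $\mrm{Aut}(K)$ here means the automorphism group of the finite structure $K$ as induced from $M$ — consistent with the usage in Lemma~\ref{char_subgr_small_index} — so no ambiguity arises. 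I would write the proof in the above order: (1)$\Rightarrow$(2) first (it is constructive), then (2)$\Rightarrow$(1), invoking Proposition~\ref{normality_prop} at the two places where a finite-index normal pair in $\mathcal{GS}(M)$ must be shown to share the same algebraically closed set.
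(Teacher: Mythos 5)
Your proof is correct and takes essentially the same route as the paper's (very terse) argument, which just cites Proposition~\ref{normality_prop} and equation~$(\star)$ from the proof of Lemma~\ref{char_subgr_small_index}; you have simply filled in the bookkeeping that those citations leave implicit. The only micro-quibble is that $|\mrm{Aut}(K)| < \omega$ follows directly from $K$ being a finite set (Hypothesis~\ref{hyp}(\ref{hyp_1}) guarantees $K$ is finite, and a finite set has finitely many permutations), but this does not affect the argument.
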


	\begin{proof} Concerning the implication ``(1) implies (2)'', let $H' = G_{\{ K \}}$, then, by Proposition \ref{normality_prop} and equation (\ref{equation_label}) in the proof of Lemma \ref{char_subgr_small_index}, we have that $H'$ is as wanted. Concerning the implication ``(2) implies (1)'', if $H$ and $H'$ are as in (2), then, by Proposition \ref{normality_prop} and equation (\ref{equation_label}) in the proof of Lemma \ref{char_subgr_small_index}, it must be the case that $H' = G_{\{ K \}}$ and $H = G_{(K)}$ for some $K \in \mathbf{A}(M)$ s.t. $\mrm{Aut}(K) \cong L$.
\end{proof}

	We now introduce a first-order structure which we refer to as the {\em expanded structure of $M$ of depth $\leq k$} (for $k \leq \omega$), where $k$ has to be thought of in the context of Notation~\ref{the_lattice notation}. Essentially, it is the orbital structure on the set of Galois algebraically closed sets of $M$ which are at distance $\leq k$ from the bottom of the lattice.

\begin{definition}\label{def_expanded_1}  Let $0 < k \leq \omega$ and recall the notations from \ref{the_lattice notation}.
\begin{enumerate}[(1)]
	%\item On the set $M$ consider the equivalence relation $a \sim b$ iff $\mrm{acl}^{\mrm{g}}_M(a) = \mrm{acl}^{\mrm{g}}_M(b)$, then the set of equivalence classes of $\sim$ is exactly the set:
	%$$\{ \mrm{acl}^{\mrm{g}}_M(a) : a \notin \mrm{acl}(\emptyset)\}.$$
	\item We let $M^{\mrm{ex}}_k$ be $\{(K, p, K') : K, K' \in \mathbf{A}_k(M) \text{ and } p: K \cong K'\}.$
	\item\label{the_identification} We identify $\mathbf{A}_k(M)$ with the set of triples $(K, \mrm{id}_K, K)$.
\end{enumerate}
\end{definition}

	\begin{definition}\label{def_expanded_12} Let $0 < k \leq \omega$. We define a first-\mbox{order structure $\mathcal{E}^{\mrm{ex}}_M(k)$ as follows:}% (so $\sim^{n}_{\mathcal{E}^{\sim}_M}$ is a $2n$-ary predicate):
	\begin{enumerate}[(1)]
	\item $\mathcal{E}^{\mrm{ex}}_M(k)$ has as domain $M^{\mrm{ex}}_k$;
	\item\label{def_expanded_12_unary} we define a unary predicate which holds of $\mathbf{A}_k(M)$, recalling \ref{def_expanded_1}(\ref{the_identification}); 
	\item\label{def_expanded_12_binary} for $1 < n< \omega$, we define an $n$-ary predicate $E_n$ as follows:
	$$((A_1, p_1, B_1), ..., (A_n, p_n, B_n)) \in E^{\mathcal{E}^{\mrm{ex}(k)}_M}_n$$
	$$\Updownarrow$$
	$$\exists g \in \mrm{Aut}(M) \text{ s.t. for all } i \in [1, n], \,  g(A_i) = B_i \text{ and } g \restriction A_i = p_i;$$
	\item\label{dom} we define a binary predicate $\mrm{Dom}$ (which stands for ``domain'')~such~that \newline $(A, p, B) \in \mathcal{E}^{\mrm{ex}}_M(k)$ is in relation with $C \in \mathbf{A}_k(M)$ if and only if $A = C$;
	\item\label{cod} we define a binary predicate $\mrm{Cod}$ (which stands for ``codomain'') such that $(A, p, B) \in \mathcal{E}^{\mrm{ex}}_M(k)$ is in relation with $C \in \mathbf{A}_k(M)$ if and only if $B = C$.
	
%	\item\label{theP_n_predicates} for $1 \leq n < \omega$, we define an $n+1$-ary predicate $P_n$ on $\mathbf{A}_k(M)$ as follows:
%	$$(A, B_1, ..., B_n) \in P^{\mathcal{E}^{\mrm{ex}}_M(k)}_n \; \Leftrightarrow \; A \subseteq \mrm{acl}^{\mrm{g}}_M(B_1 \cup \cdots B_n).$$
\end{enumerate}
%When $k = \infty$ we sometimes simply write $\mathcal{E}^{\mrm{ex}}_M$ instead of $\mathcal{E}^{\mrm{ex}}_M(\infty)$.
When we write $\mathcal{E}^{\mrm{ex}}_M$ (so without the $k$) we mean $\mathcal{E}^{\mrm{ex}}_M(\omega)$.
\end{definition}

	The next theorem shows that as soon as $k \geq \mrm{max}\{k_M, k_N\}$ the problem of determination of $\mrm{Aut}(M) \cong_{\mrm{top}} \mrm{Aut}(N)$ reduces completely to the problem of determination of $\mathcal{E}^{\mrm{ex}}_M(k) \cong \mathcal{E}^{\mrm{ex}}_N(k)$. This theorem is the core of the paper. 

	\begin{theorem}\label{the_first_crucial_lemma} Let $M, N$ be as in \ref{hyp}. Then we have the following:
	\begin{enumerate}[(1)]
	\item $G: = \mrm{Aut}(M) \cong_{\mrm{top}} \mrm{Aut}(N):= H$ implies $\mathcal{E}^{\mrm{ex}}_M(k) \cong \mathcal{E}^{\mrm{ex}}_N(k)$, for all $0 < k \leq \omega$.
	\item If $\mrm{max}\{k_M, k_N\} \leq k$, then $\mathcal{E}^{\mrm{ex}}_M(k) \cong \mathcal{E}^{\mrm{ex}}_N(k)$ implies $G \cong_{\mrm{top}} H$.
%	\item If $\mrm{max}\{k_M, k_N\} \leq k$, then every $\alpha: \mrm{Aut}(M) \cong_{\mrm{top}} \mrm{Aut}(N)$ induces an isomorphism $f_{\alpha}$ from $\mathcal{E}^{\mrm{ex}}_M(k)$ onto $\mathcal{E}^{\mrm{ex}}_M(k)$ and the assignment $\alpha \mapsto f_{\alpha}$ is bijective.
%	\item If $M = N$ and $k \geq k_M$, then $\alpha \mapsto f_{\alpha}$ is a group isomorphism from the group of topological automorphisms of $\mrm{Aut}(M)$ onto the automorphism group of $\mathcal{E}^{\mrm{ex}}_M(k)$.
	\end{enumerate}
%$$\mrm{Aut}(M) \cong_{\mrm{top}} \mrm{Aut}(N) \;\; \Leftrightarrow \;\; \mathcal{E}^{\mrm{ex}}_M(k) \cong \mathcal{E}^{\mrm{ex}}_N(k), \text{ for all } k \leq \mrm{max}(k_M, k_n).$$
\end{theorem}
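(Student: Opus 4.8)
The plan is to build both directions of the equivalence from the algebraic characterizations established in Lemma \ref{char_subgr_small_index} and Propositions \ref{normality_prop}, \ref{char_point_stab}, \ref{char_point_stab_singletons}, \ref{char_Lpoint_stab}. For part (1), suppose $\alpha \colon G \to H$ is a topological isomorphism. Since $\alpha$ is a homeomorphism it sends the lattice of open subgroups of $G$ onto the lattice of open subgroups of $H$, preserving inclusion, normality, and indices; by Lemma \ref{char_subgr_small_index} this means $\alpha$ induces a poset isomorphism $\mathcal{GS}(M) \to \mathcal{GS}(N)$. By Proposition \ref{char_point_stab} the subfamily $\mathcal{PS}(M)$ is definable inside $\mathcal{GS}(M)$ purely in terms of inclusion, normality and finite index, so $\alpha$ restricts to an order isomorphism $\mathcal{PS}(M) \to \mathcal{PS}(N)$; since $K \mapsto G_{(K)}$ is an order-reversing bijection from $\mathbf{A}(M)$ onto $\mathcal{PS}(M)$, we obtain a lattice anti-isomorphism $\varphi \colon \mathbf{A}(M) \to \mathbf{A}(N)$, which (again because the "distance from the top" characterization in Proposition \ref{char_point_stab_singletons} is expressed in the same first-order language of the poset) respects the stratification, i.e. $\varphi(\mathbf{A}_k(M)) = \mathbf{A}_k(N)$ for every $k$. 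This gives the bijection on the unary-predicate part of the expanded structures. To get the map on all of $M^{\mrm{ex}}_k$, I would use that a triple $(K, p, K')$ with $p \colon K \cong K'$ corresponds to a coset of $G_{(K)}$ inside $G_{\{K\}}$ — more precisely, via equation (\ref{equation_label}), to an element of $\mrm{Aut}(K) \cong G_{\{K\}}/G_{(K)}$ together with a choice of how $G_{(K)}$ sits as a subgroup conjugate-or-equal to $G_{(K')}$; the isomorphism $\alpha$ transports cosets to cosets, so it sends $(K,p,K')$ to a well-defined triple over $\varphi(K), \varphi(K')$. Finally one checks that the predicate $E_n$, which by Definition \ref{def_expanded_12}(\ref{def_expanded_12_binary}) says "there is $g \in \mrm{Aut}(M)$ simultaneously realizing all the $p_i$", translates under $\alpha$ because $g \in G$ lies in the intersection of the relevant cosets iff $\alpha(g) \in H$ lies in the images of those cosets; likewise $\mrm{Dom}$ and $\mrm{Cod}$ are preserved since $\varphi$ tracks the closed sets attached to each triple. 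Hence $f_\alpha \colon \mathcal{E}^{\mrm{ex}}_M(k) \to \mathcal{E}^{\mrm{ex}}_N(k)$ is an isomorphism.

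For part (2), assume $k \geq \max\{k_M, k_N\}$ and let $f \colon \mathcal{E}^{\mrm{ex}}_M(k) \to \mathcal{E}^{\mrm{ex}}_N(k)$ be an isomorphism. The key observation is that when $k \geq k_M$ the truncation is lossless: every Galois-algebraically closed finite set of $M$ is the join of algebraic closures of singletons, and by Notation \ref{kM_notation} each such closure sits at distance $\leq k_M \leq k$ from the bottom, so $\mathbf{A}_k(M)$ generates $\mathbf{A}(M)$ as a $\vee$-semilattice and $\mathbf{A}_k(M) = \mathbf{A}(M)$ in fact holds at the relevant level — meaning the truncated expanded structure carries all the data of $\mathcal{E}^{\mrm{ex}}_M$. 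I would then reverse the construction of part (1): from $f$, reading off the $\mrm{Dom}$/$\mrm{Cod}$ predicates and the unary predicate, recover a lattice anti-isomorphism $\varphi \colon \mathbf{A}(M) \to \mathbf{A}(N)$; build from it a bijection $\mathcal{GS}(M) \to \mathcal{GS}(N)$ by sending $G_{(K,L)}$ to $H_{(\varphi(K), L')}$ where $L' \leq \mrm{Aut}(\varphi(K))$ is the subgroup of $\mrm{Aut}(K) \cong \mrm{Aut}(\varphi(K))$ matched through $f$ on the fiber of triples over $K$ (using that $\mrm{Aut}(K)$ is identified with the set of triples $(K,p,K)$ carrying a group structure that $f$ respects, via $E_2$). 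One then defines $\alpha \colon G \to H$ by declaring, for $g \in G$, that $\alpha(g)$ is the unique element of $H$ whose action on each $\varphi(K)$ is the $f$-image of $g \restriction K$ — this is well-defined and injective because $\bigcap_{K} G_{(K)} = \{e\}$ (the $G_{(K)}$ form a neighborhood basis of the identity), surjective by the symmetric argument for $f^{-1}$, a homomorphism because $E_n$-preservation guarantees compatibility with composition, and bicontinuous because it maps the basic open subgroups $G_{(K)}$ onto the basic open subgroups $H_{(\varphi(K))}$. Hence $G \cong_{\mrm{top}} H$.

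The main obstacle I anticipate is the coherence/gluing step in both directions: producing the actual topological isomorphism (or the actual map $f_\alpha$ on triples) from the lattice-plus-$E_n$ data, rather than merely a bijection of the posets of subgroups. One must verify that the assignment $g \mapsto (g \restriction K)_{K \in \mathbf{A}(M)}$ and its $f$-transported counterpart really do patch together to a single automorphism of $N$ — i.e. that the local pieces are globally consistent — and this is exactly where Hypothesis \ref{hyp}(\ref{hyp_3}) (extendability of isomorphisms of finite closed sets) and the directedness of $\mathbf{A}(M)$ under $\vee$ are essential, together with the hypothesis $k \geq k_M$ to ensure no information is discarded. A secondary subtlety is checking that $\mrm{Aut}(K)$, as reconstructed from the fiber of $f$ over $K \in \mathbf{A}_k(M)$ together with the $E_2$ predicate and the identification in Definition \ref{def_expanded_1}(\ref{the_identification}), is recovered as a group and not merely as a set — but this follows from the fact that composition of partial isomorphisms is encoded by the existential quantifier in Definition \ref{def_expanded_12}(\ref{def_expanded_12_binary}). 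The remaining verifications (preservation of inclusion, normality, finite index under a topological isomorphism; that $\varphi$ respects $\wedge$ and $\vee$) are routine given Propositions \ref{normality_prop}–\ref{char_point_stab_singletons}, so I would state them briefly and move on.
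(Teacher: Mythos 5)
Your proposal follows essentially the same route as the paper's proof. For (1) you identify $\mathcal{PS}_k$ (hence $\mathbf{A}_k$) inside the lattice of open subgroups via Propositions \ref{char_point_stab}, \ref{char_point_stab_singletons}, then transport cosets of pointwise stabilizers to get the map $f_\alpha$ on triples and to check $E_n$, $\mrm{Dom}$, $\mrm{Cod}$; for (2) you reconstruct $\alpha_f(g)$ by patching the $f$-images of the restrictions $g \restriction K$ using $E_n$-compatibility and a covering property. This is exactly the paper's argument (the paper realises the patching concretely as $\alpha_f(g)=\lim \tilde h_n$ over an enumeration of $\mathbf{A}_k(M)$).

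A few corrections are in order. First, $\varphi\colon\mathbf{A}_k(M)\to\mathbf{A}_k(N)$ is order-\emph{preserving}, not an anti-isomorphism: both $K\mapsto G_{(K)}$ and $L\mapsto H_{(L)}$ reverse inclusion and $\alpha$ preserves it, so the two reversals cancel. Second, a triple $(K,p,K')$ with $K'\neq K$ is not a coset of $G_{(K)}$ \emph{inside} $G_{\{K\}}$; it is the coset $\tilde p\,G_{(K)}$ in $G$, for any $\tilde p\in G$ extending $p$ (so $\tilde p\notin G_{\{K\}}$), and one also has to track that $\tilde p\,G_{(K)}\tilde p^{-1}=G_{(K')}$ — the paper makes this identification explicit. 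Third, and most substantively, your assertion in part (2) that the truncation is lossless, i.e.\ that ``$\mathbf{A}_k(M)=\mathbf{A}(M)$ in fact holds at the relevant level,'' is false in general: if $M$ has no algebraicity and $k=k_M=1$, then $\mathbf{A}_1(M)$ is the set of singletons while $\mathbf{A}(M)$ is the set of all finite subsets. For the same reason $\{G_{(K)}:K\in\mathbf{A}_k(M)\}$ is in general \emph{not} a neighbourhood basis of the identity when $k$ is finite. What is actually true and what the proof needs — the paper isolates it as $(\star)$ — is the covering property $\bigcup\{K:K\in\mathbf{A}_k(M)\}=M$ (and its analogue for $N$): this gives $\bigcap_{K\in\mathbf{A}_k(M)}G_{(K)}=\{e\}$, hence well-definedness and injectivity of the patched $\alpha_f(g)$, and continuity then follows from the fact that any $H_{(B)}$ with $B\subseteq_\omega N$ contains the open subgroup $\bigcap_i H_{(L_i)}$ for finitely many $L_i\in\mathbf{A}_k(N)$ with $B\subseteq\bigcup_i L_i$.
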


	\begin{proof} Concerning (1), let $\alpha : \mrm{Aut}(M) \cong_{\mrm{top}} \mrm{Aut}(N)$. By \ref{char_point_stab_singletons}, $\alpha$ induces a bijection of  $\mathcal{PS}_k(M)$ onto $\mathcal{PS}_k(N)$. Let $f$ be the corresponding bijection of $\mathbf{A}_k(M)$ onto $\mathbf{A}_k(N)$. Let also $G = \mrm{Aut}(M)$ and $H = \mrm{Aut}(N)$. Then, \mbox{for $K \in \mathbf{A}_k(M)$ and $g \in  G$:}
$$\alpha(gG_{(K)}) = \alpha(g)\alpha(G_{(K)}) = \alpha(g) H_{(f(K))}.$$
Hence, $\alpha$ also induces a bijection of $\{gG_{(K)} : K \in \mathbf{A}_k(M)\}$ onto $\{hH_{(K)} : K \in \mathbf{A}_k(N)\}$. Thus, given $(K, p, K') \in M^{\mrm{ex}}_k$ we have that for any extension $\tilde{p}$ of $p$ to an automorphism of $M$ we have that $\alpha(\tilde{p} G_{(K)}) = \alpha(\tilde{p}) H_{(f(K))}$ and also that:
$$\alpha(\tilde{p}) H_{(f(K))} \alpha(\tilde{p})^{-1} = H_{f(K')},$$
and so it makes sense to define $f_\alpha(K, p, K') = f(K, p, K')$ as $(f(K), \alpha(\tilde{p}) \restriction f(K), f(K'))$ (it is easy to see that this does not depend on the choice of $\tilde{p}$, recalling the identification of $gG_{(K)}$ with the triple $(K, g \restriction K, g(K))$). Notice that $f(K, \mrm{id}_K, K)$ is $(f(K), \mrm{id}_{f(K)}, f(K))$, as $\tilde{\mrm{id}}_K G_{(K)} = G_{(K)}$ and so $\alpha(\tilde{\mrm{id}}_K) H_{(f(K))} = H_{(f(K))}$, hence our definition of $f$ is notationally consistent with the identification made in \ref{def_expanded_1}(\ref{the_identification}). Thus, the map $f = f_{\alpha}$ that we just defined is a bijection between $M^{\mrm{ex}}_k$ and $N^{\mrm{ex}}_k$ which sends $\mathbf{A}_k(M)$ onto $\mathbf{A}_k(N)$ (and so it preserves the unary predicate from \ref{def_expanded_12}(\ref{def_expanded_12_unary})). 
%We now move to the predicates $P_n$ and $R_n$. Firstly, it is immediate to see that the relations $P_n$, for $1 \leq n < \omega$, from \ref{def_expanded_12}(\ref{theP_n_predicates}) are preserved by $f \restriction \mathbf{A}_k(M)$, in fact:
%	\[ \begin{array}{rcl}	
%(A, B_1, ..., B_n) \in P^{\mathcal{E}^{\mrm{ex}}_M(k)}_n
%& \Leftrightarrow &  A \subseteq \mrm{acl}^{\mrm{g}}_M(B_1 \cup \cdots B_n) \\
%& \Leftrightarrow & G_{(\mrm{acl}^{\mrm{g}}_M(B_1 \cup \cdots B_n))} \leq G_{(A)} \\
%& \Rightarrow     &  \text{ for all } i \in [1, n], \; G_{(B_i)} \leq G_{(A)}\\
%& \Rightarrow     &  \text{ for all } i \in [1, n], \; H_{f(B_i)} \leq H_{f(A)} \\
%& \Rightarrow     &  H_{(\mrm{acl}^{\mrm{g}}_M(f(B_1) \cup \cdots f(B_n)))} \leq H_{f(A)} \\
%& \Rightarrow     &  f(A) \subseteq \mrm{acl}^{\mrm{g}}_M(f(B_1) \cup \cdots f(B_n)) \\
%& \Rightarrow     & (f(A), f(B_1), ..., f(B_n)) \in P^{\mathcal{E}^{\mrm{ex}}_N(k)}_n.
%\end{array} \]	
%Furthermore, reversing the role of $\alpha$ and $\alpha^{-1}$ we also see that:
%$$(f(A), f(B_1), ..., f(B_n)) \in P^{\mathcal{E}^{\mrm{ex}}_N(k)}_n \Rightarrow A \subseteq \mrm{acl}^{\mrm{g}}_M(B_1 \cup \cdots B_n).$$
We now show that $f$ preserves also the relations $E_n$, for $1 < n < \omega$, from \ref{def_expanded_12}(\ref{def_expanded_12_binary}). To this extent observe the following sequence of equivalences:
$$((A_1, p_1, B_1), ..., (A_n, p_n, B_n)) \in E^{\mathcal{E}^{\mrm{ex}}_M(k)}_n$$
$$\Updownarrow$$
$$\exists g \in \mrm{Aut}(M) \text{ s.t. for all } i \in [1, n], \,  g(A_i) = B_i \text{ and } g \restriction A_i = p_i.$$
$$\Updownarrow$$
$$\exists g \in \mrm{Aut}(M) \text{ s.t. } g G_{(A_1)}g^{-1} = G_{(B_1)}, \, ... , \, g G_{(A_n)}g^{-1} = G_{(B_n)}$$
$$\Updownarrow$$
$$\exists g \in \mrm{Aut}(M) \text{ s.t. } \alpha(g) H_{(f(A_1))}\alpha(g)^{-1} = H_{(f(B_1))}, \,  ... , \, \alpha(g) H_{(f(A_n))}\alpha(g)^{-1} = H_{(f(B_n))}$$
$$\Updownarrow$$
$$f(A_1, p_1, B_1), \, ..., \, f(A_n, p_n, B_n) \in E^{\mathcal{E}^{\mrm{ex}}_N(k)}_n.$$
%Finally, reversing the role of $\alpha$ and $\alpha^{-1}$ it is easy to see that we also have:
%$$(f(A_1), f(p_1), f(B_1)), \, ..., \, (f(A_n), f(p_n), f(B_n)) \in E^{\mathcal{E}^{\mrm{ex}}_N(k)}_n.$$
%$$\Downarrow$$
%$$((A_1, p_1, B_1), ..., (A_n, p_n, B_n)) \in E^{\mathcal{E}^{\mrm{ex}}_M(k)}_n$$
%and so $f: \mathcal{E}^{\mrm{ex}}_M(k) \cong \mathcal{E}^{\mrm{ex}}_N(k)$, this concludes the proof of item (1).
Finally the fact that also the predicates $\mrm{Dom}$ and $\mrm{Cod}$ from \ref{def_expanded_12}(\ref{dom})(\ref{cod}) are preserved is easy, or see the proof of \ref{second_theorem} where more is shown. So we proved (1).

\medskip
\noindent
Concerning item (2), let $\mrm{max}\{k_M, k_N\} \leq k$ and $f: \mathcal{E}^{\mrm{ex}}_M(k) \cong \mathcal{E}^{\mrm{ex}}_N(k)$. Recalling \ref{kM_notation}, observe that under this assumption we crucially have:
\begin{equation}\tag{$\star$}
\bigcup \{ K : K \in \mathbf{A}_k(M) \} = M \text{ and } \bigcup \{ K : K \in \mathbf{A}_k(N) \} = N.
\end{equation}
Now, given $g \in G$ we have to define $\alpha_f(g) = \alpha(g) \in H$ in such a way that the resulting map $\alpha_f: \mrm{Aut}(M) \cong_{\mrm{top}} \mrm{Aut}(N)$. To this extent, first of all enumerate $\mathbf{A}_k(M)$ as $(K_i : i < \omega)$ and let $g \in G$. For every $i < \omega$, let:
$$(K_i, g \restriction K_i, g(K_i)) =: (A_i, p_i, B_i).$$
Then, clearly, for every $n < \omega$, we have that:
$$((A_1, p_1, B_1), ..., (A_n, p_n, B_n)) \in E^{\mathcal{E}^{\mrm{ex}}_M(k)}_n,$$
and so, as $f: \mathcal{E}^{\mrm{ex}}_M(k) \cong \mathcal{E}^{\mrm{ex}}_N(k)$, we infer that:
$$f(A_1, p_1, B_1), \, ..., \, f(A_n, p_n, B_n) \in E^{\mathcal{E}^{\mrm{ex}}_N(k)}_n.$$
Now, for every $i < \omega$, let $h_i$ be the second component of the triple $f(A_1, p_1, B_1)$. Then, recalling how the predicates $E^{\mathcal{E}^{\mrm{ex}}_N(k)}_n$ were defined, for every $n < \omega$ there is $\tilde{h}_n \in H$ such that $h_i \subseteq \tilde{h}_n$, for every $i \leq n$. Now, by $(\star)$ we have that $\bigcup \{ K : K \in \mathbf{A}_k(N) \} = N$ and so for every $a \in N$ we have that the value $(\tilde{h}_n(a) : n < \omega)$ is eventually constant. Thus, we can define:
$$\alpha_f(g) = \alpha(g) = \mrm{lim}(\tilde{h}_n : n < \omega).$$
Clearly $\alpha(g)$ is a one-to-one function from $N$ into $N$. We now argue that $\alpha(g)$ is surjective, as it will then immediately follow that it is an automorphism of $N$, because of how it was defined. To this extent, notice that $f \restriction \mathbf{A}_k(M)$ induces a bijection of $\mathbf{A}_k(M)$ onto $\mathbf{A}_k(N)$ and, as before, by $(\star)$ we have that $\bigcup \{ K : K \in \mathbf{A}_k(N) \} = N$, using this it is easy to see that $\alpha(g)$ is surjective (recalling the predicates $\mrm{Dom}$ and $\mrm{Cod}$ from \ref{def_expanded_12}(\ref{dom})(\ref{cod})). Furthermore, it is immediate to verify that this correspondence $g \mapsto \alpha(g) = \alpha_f(g)$ is a topological isomorphism from $\mrm{Aut}(M)$ onto $\mrm{Aut}(N)$. This concludes the proof. 
%Finally, items (3) and (4) are easy to see observing that the maps $\alpha \mapsto f_{\alpha}$ and $f \mapsto \alpha_f$ that we just defined are inverses of each others, for $k \geq \mrm{max}\{k_M, k_N\}$.
\end{proof}

	The fact that above we used an arbitrary $k \leq \omega$ instead of using directly $\omega$ is explained by the following crucial lemma needed for the proof of Theorem~\ref{main_theorem}. On the other hand, recall that if $M$ is $\omega$-categorical, then $k_M < \omega$.

	\begin{lemma}\label{preserve_categoricity} If $G$ is oligomorphic, then, for every $k < \omega$, $\mathcal{E}^{\mrm{ex}}_{M_G}(k)$ is $\omega$-categorical.
\end{lemma}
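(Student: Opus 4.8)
The plan is to show that $\mathcal{E}^{\mrm{ex}}_{M_G}(k)$ has only finitely many orbits on $n$-tuples for each $n$, which, since the structure is countable and its automorphisms are (by construction) induced by automorphisms of $M_G$, gives $\omega$-categoricity via the Ryll-Nardzewski theorem. First I would recall that since $G$ is oligomorphic, $M = M_G$ is $\omega$-categorical and homogeneous, so by Remark~\ref{remark_cat} it satisfies Hypothesis~\ref{hyp}(\ref{hyp_1})(\ref{hyp_3}); in particular $\mrm{acl}^{\mrm{g}}_M$ is locally finite and, crucially, $k_M < \omega$, so for $k < \omega$ the set $\mathbf{A}_k(M)$ consists of Galois-algebraically closed sets whose cardinality is bounded by some fixed $m = m(k) < \omega$ (each element of $\mathbf{A}_k(M)$ is $\mrm{acl}^{\mrm{g}}_M$ of a set of size $\leq k$, hence has size $\leq \max\{|\mrm{acl}^{\mrm{g}}_M(\bar a)| : |\bar a| \leq k\}$, which is finite by $\omega$-categoricity).

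Next I would count orbits. A single element of $M^{\mrm{ex}}_k$ is a triple $(K, p, K')$ with $K, K' \in \mathbf{A}_k(M)$ and $p : K \cong K'$; since $|K|, |K'| \leq m$, an $n$-tuple $\big((A_1,p_1,B_1), \dots, (A_n,p_n,B_n)\big)$ is determined by the $2n$ Galois-algebraically closed sets $A_1, B_1, \dots, A_n, B_n$ together with the partial maps $p_i$, so it "lives inside" the finite set $\bigcup_i (A_i \cup B_i) \subseteq M$, which has size $\leq 2nm$. Since $M$ is $\omega$-categorical, $G = \mrm{Aut}(M)$ has only finitely many orbits on tuples from $M$ of length $\leq 2nm$, and hence only finitely many orbits on the (finitely many, once the underlying tuple is fixed) ways of decorating such a configuration into an $n$-tuple of $M^{\mrm{ex}}_k$. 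By the definition of $E_n$ in \ref{def_expanded_12}(\ref{def_expanded_12_binary}), two $n$-tuples of $M^{\mrm{ex}}_k$ lie in the same orbit of the canonical action precisely when some $g \in \mrm{Aut}(M)$ simultaneously carries one configuration to the other respecting all the $A_i, B_i, p_i$ — that is, exactly the relation generated by the $E_n$'s together with $\mrm{Dom}$, $\mrm{Cod}$ and the unary predicate. Therefore the number of $E_n$-classes (equivalently, orbits on $n$-tuples of the structure) is bounded by the number of $\mrm{Aut}(M)$-orbits on tuples from $M$ of length $\leq 2nm$, which is finite.

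Finally I would invoke Ryll-Nardzewski: the $\omega$-categoricity of a countable structure is equivalent to its automorphism group having finitely many orbits on $n$-tuples for every $n$, and the orbit-counting just carried out shows this for $\mathcal{E}^{\mrm{ex}}_{M_G}(k)$ (whose automorphism group contains the image of $\mrm{Aut}(M)$, which already realizes all the orbits counted above, and any further automorphism only coarsens the orbit partition). The main obstacle I anticipate is the bookkeeping in the orbit-counting step — specifically, making precise that an $\mrm{Aut}(M)$-orbit on the underlying length-$\leq 2nm$ tuple of $M$ determines, together with the finitely many combinatorial patterns of "which coordinates form which algebraically closed set and which $p_i$ is which partial map", an $E_n$-class of the expanded structure — and being careful that the bound $m(k)$ on sizes of members of $\mathbf{A}_k(M)$ genuinely uses $k < \omega$ and the $\omega$-categoricity of $M$ (so that there is a uniform finite bound on $|\mrm{acl}^{\mrm{g}}_M(\bar a)|$ for $\bar a$ of bounded length). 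Everything else is routine.
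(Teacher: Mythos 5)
Your proposal is correct and takes essentially the same route as the paper. You bound the sizes of members of $\mathbf{A}_k(M)$ using $k < \omega$ and $\omega$-categoricity (the paper simply asserts $\{|K| : K \in \mathbf{A}_k(M)\}$ is finite), then reduce orbit-counting on $n$-tuples of $M^{\mrm{ex}}_k$ to orbit-counting on bounded-length tuples in $M$ together with a finite amount of combinatorial bookkeeping, and finish with Ryll-Nardzewski and the observation that $\mrm{Aut}(M)$ maps into $\mrm{Aut}(\mathcal{E}^{\mrm{ex}}_{M_G}(k))$ so any extra automorphisms only coarsen the orbit partition; the paper phrases the reduction in terms of cosets of the subgroups $G_{(K)}$ and the conjugation action, but the underlying argument is the same.
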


	\begin{proof} Fix $k < \omega$ and let $\mathcal{E}^{\mrm{ex}}_{M_G}(k) = A_k$. We have to show that $\mrm{Aut}(A_k)$ acts oligomorphically on $A_k$. It follows from the proof of \ref{the_first_crucial_lemma}, that there is a bijective correspondence between $A_k$ and the set of cosets of open subgroups of the form $G_{(K)}$ for $K \in \mathbf{A}_k(M)$, and that $G$ acts on the latter set by conjugation as $\mrm{Aut}(M)$ acts on $M$. Hence, if for some $n < \omega$ the action of $\mrm{Aut}(M)$ on $(A_k)^n$  would have infinitely many orbits, the same would happen for the action of $G$ on cosets of open subgroups of the form $G_{(K)}$ for $K \in \mathbf{A}_k(M)$, but then, as the set $\{ |K| : K \in \mathbf{A}_k(M)\}$ is finite, we would be able to find $m < \omega$ such that $G$ acts on $M^m$ (by automorphisms) with infinitely many orbits, which is absurd.
\end{proof}

	The following proposition shows that the domain of our Borel reduction from Theorem~\ref{main_theorem} is Borel, which is \mbox{necessary for the statement of \ref{main_theorem} to make sense.}

	\begin{proposition}\label{prop_Borel} Having weak elimination of imaginaries is a Borel property of $\omega$-categorical structures, equivalently, automorphism groups of countable $\omega$-omega categorical structures with weak elimination of imaginaries form a Borel set.
\end{proposition}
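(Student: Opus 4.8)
The plan is to work inside the standard Borel space of closed subgroups of $\mrm{Sym}(\omega)$ (equivalently, the space of oligomorphic groups, which is Borel inside it — this is the ambient space used in \cite{coarse, nies}), and to show that the subset of those $G$ that are oligomorphic \emph{and} have weak elimination of imaginaries is Borel. Recall from the introduction that weak elimination of imaginaries for $G = \mrm{Aut}(M)$ is equivalent to the purely group-theoretic condition: for every $H \leq G$ that contains the pointwise stabilizer $G_{(F)}$ of some finite $F \subseteq \omega$, there is a \emph{unique smallest} finite algebraically closed $K \subseteq \omega$ with $G_{(K)} \leq H \leq G_{\{K\}}$. So the task is to quantify over the relevant data in a way that stays within the Borel hierarchy.

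First I would record the ingredients that are already Borel or even clopen as functions of $G$: the relation ``$g \in G$'' is Borel; for each fixed finite $F \subseteq \omega$ the subgroup $G_{(F)}$ is determined by $G$ in a Borel way (it is $G \cap \mrm{Sym}(\omega)_{(F)}$), and likewise $G_{\{F\}}$; the orbit of a point $a$ under $G_{(F)}$ is a Borel function of $G$, hence so is the predicate ``$a \in \mrm{acl}^{\mrm{g}}_M(F)$'' (the orbit is finite), and therefore ``$K$ is finite and algebraically closed'' is a Borel (indeed countable Boolean combination of Borel) condition on $K$ given $G$. The key observation that keeps us Borel is that in the oligomorphic setting the open subgroups $H$ we must quantify over are \emph{countably many} relative to $G$: every open subgroup contains some $G_{(F)}$ with $F$ finite, and modulo $G_{(F)}$ there are only finitely many subgroups between $G_{(F)}$ and $G$ because $G_{(F)}$ has finite index issues replaced by the finiteness of the orbit structure — more precisely, an open $H$ with $G_{(F)} \leq H$ is the union of a set of cosets of $G_{(F)}$, and by oligomorphy only countably many subsets of $G/G_{(F)}$ give subgroups; each such $H$ is coded by a finite or countable amount of data that is enumerated Borel-ly in $G$. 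Thus ``for every open $H \leq G$'' becomes a countable quantifier ``for every $F \subseteq_\omega \omega$ and every $H$ in the (Borel-enumerated) list of subgroups containing $G_{(F)}$'', which is a Borel (in fact $\Pi^1_1$-looking but actually arithmetical over the Borel data) quantification; combined with the inner assertion ``there exists a finite algebraically closed $K$ with $G_{(K)} \leq H \leq G_{\{K\}}$, and it is contained in every other such $K'$'' — itself a countable conjunction/disjunction of Borel conditions on $G$ — we get that WEI is Borel.

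The step I expect to be the main obstacle is the uniformity claim in the previous paragraph: namely that, as $G$ ranges over oligomorphic groups, one can \emph{uniformly and Borel-measurably} enumerate a countable family of open subgroups containing each $G_{(F)}$ in such a way that every open subgroup of $G$ appears. One clean way to handle this is to avoid enumerating subgroups at all and instead quantify directly over the \emph{data} $(K, L)$ with $K \subseteq_\omega \omega$ algebraically closed and $L \leq \mrm{Sym}(K)$ a subgroup of the (finite) symmetric group on $K$: by Lemma~\ref{char_subgr_small_index}, if $G$ has WEI then its open subgroups are exactly the $G_{(K,L)}$, and the negation of WEI can then be phrased as: either some open subgroup fails to be of this form, or uniqueness of the smallest $K$ fails — but a cleaner equivalent, avoiding the output side, is to use the permutation-group reformulation stated in the introduction, $G_{(A\cap B)} = \langle G_{(A)} \cup G_{(B)}\rangle$ for all finite algebraically closed $A, B$. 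This is a \emph{countable} conjunction (over pairs $(A,B)$ of finite subsets of $\omega$) of statements each of which asserts equality of two subgroups of $G$, and equality ``$G_{(A\cap B)} \subseteq \langle G_{(A)} \cup G_{(B)}\rangle$'' unwinds to: for every $g \in G$ with $g \restriction (A\cap B) = \mrm{id}$, $g$ lies in the subgroup generated by $\{h \in G : h \restriction A = \mrm{id}\} \cup \{h \in G : h \restriction B = \mrm{id}\}$, i.e.\ $g$ is a finite product of such elements — a countable disjunction over word lengths and over the (countably many) choices of factors of Borel conditions on $G$. Hence the whole property is a countable Boolean combination of Borel conditions, so Borel; intersecting with the Borel set of oligomorphic groups gives the claim. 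I would present the argument via this last route, remarking that the orbit-finiteness needed to see ``$A$ algebraically closed'' is Borel follows from oligomorphy exactly as in the finiteness of $k_{M}$ discussed after Notation~\ref{kM_notation}.
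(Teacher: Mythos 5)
Your route is genuinely different from the paper's. The paper composes the Borel assignment $G \mapsto M_G$ (producing a countable structure with $\mrm{Aut}(M_G) \cong_{\mrm{top}} G$) with the observation that, on the space of countable structures with domain $\omega$, the Hodges-style definition of weak elimination of imaginaries (in terms of equivalence formulas and finite sets of representatives) is expressible in $\mathfrak{L}_{\omega_1,\omega}$ and hence Borel; the claim then follows by pulling back along a Borel map and intersecting with the Borel set of oligomorphic groups. You instead stay entirely inside the Effros Borel space of closed subgroups and try to render the purely group-theoretic reformulation $G_{(A\cap B)} = \langle G_{(A)} \cup G_{(B)}\rangle$ Borel directly. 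That is a legitimate and arguably more self-contained plan, but it has a real gap as written.

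The gap is in the sentence claiming that ``$g$ is a finite product of such elements'' is ``a countable disjunction over word lengths and over the (countably many) choices of factors.'' The factors range over $G_{(A)} \cup G_{(B)}$, which is an uncountable Polish set, not a countable one, so the disjunction you describe is not countable; likewise the outer quantifier ``for every $g$ with $g\restriction(A\cap B)=\mrm{id}$'' ranges over a Polish set. As stated, the unwinding you give is $\Sigma^1_1 \wedge \Pi^1_1$-looking, not visibly Borel. This is repairable, but it needs the standard Effros-space machinery: the maps $G \mapsto G_{(A)}$, $G \mapsto G_{(B)}$, $G \mapsto G_{(A\cap B)}$ are Borel into the space of closed subgroups, so by the Kuratowski--Ryll-Nardzewski selection theorem one can pick, Borel-uniformly in $G$, countable dense sequences in each of these; since $\langle G_{(A)} \cup G_{(B)}\rangle$ is an \emph{open} (hence closed) subgroup, membership in it is detected by approximation from the chosen dense sequences, and the quantifier over $g \in G_{(A\cap B)}$ can likewise be replaced by a quantifier over a selected countable dense set. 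None of this appears in your proposal, and without it the reduction to a countable Boolean combination is unjustified. The same issue, in milder form, arises already in your earlier remark that the open subgroups of $G$ containing a fixed $G_{(F)}$ can be ``enumerated Borel-ly''; that is true but again needs a selection argument. By contrast, the paper's detour through the structure $M_G$ sidesteps all of this: once one is looking at countable structures on $\omega$, the quantifiers in the WEI definition really do range over countable sets (tuples from $\omega$, codes for formulas), and $\mathfrak{L}_{\omega_1,\omega}$-definability gives Borel-ness for free. So: your approach is salvageable and genuinely different, but as written it has a genuine gap in the countability of the quantifiers, precisely where the paper's choice to pass to $M_G$ does the work.
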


	\begin{proof}  We argue as follows:
	\begin{enumerate}[(i)]
	\item Given a closed $G \leq \mrm{Sym}(\omega)$ we can in a Borel way obtain a countable structure $M_G$ in a countable \mbox{signature s.t. $G \cong_{\mrm{top}} \mrm{Aut}(M_G)$ (cf. \cite[Section~2.5]{nies});}
	\item The collection of $\omega$-categorical structures with weak elimination of imaginaries is a Borel subset of the Borel space of structures with domain $\omega$ in a language with $\aleph_0$-many predicates of arity $n$, for every $0 < n < \omega$.
	\item The inverse image of a Borel set under a Borel function is Borel.
	\item Oligomorphic groups are Borel in the space of closed subgroups of $\mrm{Sym}(\omega)$.
	\item Intersections of Borel sets are Borel.
\end{enumerate}
Item (iii) is by definition. Items (ii) and (v) are clear. Item (iv) is easy, see also \cite[pg. 2150029-7]{coarse}. Concerning (ii), we use the following definition of weak elimination of imaginaries from \cite[pg.~161]{hodges}: $A$ has weak elimination of imaginaries if for every equivalence formula $\theta(\bar{x}, \bar{y})$ of $A$ there are a formula $\varphi(\bar{x}, \bar{z})$ and a finite set of tuples $X$ from $A$ such that the equivalence class $\bar{a}/\theta$ can be written as $\varphi(A^n, \bar{b})$ iff $\bar{b}$ lies in $X$. But it is easy to see that this condition can be written as an $\mathfrak{L}_{\omega_1, \omega}$-theory (infinitary logic with countable conjunctions and countable disjunctions) and so clearly the set of structures satisfying this condition is Borel. Hence (ii) holds.
\end{proof}

%	\begin{proof} First of all observe that $M_G$ is $\omega$-categorical and so $\mrm{acl}_{M_G} = \mrm{acl}^{\mrm{g}}_{M_G}$ is uniformly locally finite. Secondly, observe that for every finite $n$, there are only finitely formulas with at most $n$ free variables and so the relation ``$x$ is in the algebraic closure of an $n$-tuple $\bar{y}$'' is first-order definable in $M_G$. From this it follows easily:
%	\begin{enumerate}[$(\star)$]
%	\item
%	\begin{enumerate}[(1)]
%	\item the equivalence relation $\equiv$ is first-order definable in $M_G$;
%	\item for fixed $m$ and $n$ the relations $\leq^{(n, m)}_{\mathcal{E}^{\mrm{acl}}_M}$ and $\sim^{(n, m)}_{\mathcal{E}^{\mrm{acl}}_M}$ are definable in $M_G$.
%\end{enumerate}	
%\end{enumerate}
%Hence, $\mathcal{E}^{\mrm{acl}}_{M_G}$ is $\emptyset$-interpretable in $M_G$ and so clearly it is $\omega$-categorical, as desired.
%\end{proof}

	Finally, we put everything together in order to prove our smoothness result.

	\begin{proof}[Proof of Theorem~\ref{main_theorem}] The idea of the proof is similar to the proof of \cite[Proposition~4.2]{coarse}. As shown in \cite[pg.~1193]{nies}, given a closed subgroup $G$ of $\mrm{Sym}(\omega)$ we can in a Borel way obtain a countable structure $M_G$ in a countable signature such that $G \cong_{\mrm{top}} \mrm{Aut}(M_G)$. Similarly, for fixed $0 < k < \omega$, the assignment $M_G \mapsto \mathcal{E}^{\mrm{ex}}_{M_G}(k)$ is Borel, and so also the  assignment $G \mapsto (\mathcal{E}^{\mrm{ex}}_{M_G}(k) : 0 < k < \omega)$ is Borel.
	Now, by \ref{preserve_categoricity}, we have that for every $0 < k < \omega$ the structures $\mathcal{E}^{\mrm{ex}}_{M_G}(k)$ and $\mathcal{E}^{\mrm{ex}}_{M_H}(k)$ are $\omega$-categorical (as $G$ and $H$ are oligomorphic). Finally, for countable structures in a fixed countable language mapping $M$ to its first-order theory is also a Borel assignment. Also recall that for $G$ oligomorphic we have that the value $k_{M_G}$ from \ref{kM_notation}  is finite. Hence, if $M_G$ and $M_H$ have weak elimination of imaginaries, then they satisfy the assumptions of \ref{hyp} and so by Theorem~\ref{the_first_crucial_lemma} \mbox{we have that the following holds:}
	$$G \cong_{\mrm{top}} H \; \Leftrightarrow (\mrm{Th}(\mathcal{E}^{\mrm{ex}}_{M_G}(k)) : 0 < k < \omega) = (\mrm{Th}(\mathcal{E}^{\mrm{ex}}_{M_H}(k)) : 0 < k < \omega),$$
notice in fact that $M_G$ and $M_H$ are $\omega$-categorical and so, as observed right after \ref{kM_notation}, $k_M, k_N < \omega$. 
Thus, we Borel reduced the equivalence relation $\cong_{\mrm{top}}$ to equality on a standard Borel space, and so we have that the relation $\cong_{\mrm{top}}$ is smooth. 
%	\[ \begin{array}{rcl}	
%G \cong_{\mrm{top}} H
%     	    & \Leftrightarrow &  \mathcal{E}^{\mrm{ex}}_{M_G} \cong \mathcal{E}^{\mrm{ex}}_{M_H} \\
%	     	& \Leftrightarrow & 
%	     	\mrm{Th}(\mathcal{E}^{\mrm{ex}}_{M_G}) = \mrm{Th}(\mathcal{E}^{\mrm{ex}}_{M_H}).
%\end{array} \]	
\end{proof}

%	\begin{proof}[Proof of Theorem~\ref{second_theorem}] This follows from \ref{the_first_crucial_lemma} passing to canonical structures.
%\end{proof}

\section{Outer automorphisms} 

	In this section we deal with applications of our methods to the problem of description of outer automorphisms of automorphism groups of $\omega$-categorical structures. First of all, although the structure $\mathcal{E}^{\mrm{ex}}_M(k)$ that we defined in \ref{def_expanded_12} was sufficient for the sake of proving Theorem~\ref{main_theorem}, actually any isomorphism $\alpha: \mrm{Aut}(M) \cong_{\mrm{top}} \mrm{Aut}(N)$ preserve more relations than the ones isolated in $\mathcal{E}^{\mrm{ex}}_M(k)$. This motivates the introduction of a definitional expansion of $\mathcal{E}^{\mrm{ex}}_M(k)$, which we name $\mathcal{E}^{\hat{\mrm{ex}}}_M(k)$.  The reason for the introduction of $\mathcal{E}^{\hat{\mrm{ex}}}_M(k)$ is that it will be easier to describe $\mrm{Aut}(\mathcal{E}^{\hat{\mrm{ex}}}_M(k))$, which in turn will allow us to describe $\mrm{Aut}_{\mrm{top}}(\mrm{Aut}(M))$, which is our aim.

%	Recall that $\mathcal{E}^{\mrm{ex}}_M(\omega)$ has domain $\{(K, p, K') : K, K' \in \mathbf{A}_k(M) \text{ and } p: K \cong K'\}$.

	\begin{definition} Given an $L$-structure $M$ we say that $N$ is a definitional expansion of $M$ if $N$ is a structure in a language $L' \supseteq L$ such that $N \restriction L = M$ and, for every symbol in $L' \setminus L$, the interpretation of such symbol in $N$ is $\emptyset$-definable in $M$.
\end{definition}

	\begin{definition}\label{def_expanded_12+}
Let $k \leq \omega$. We define a definitional expansion $\mathcal{E}^{\hat{\mrm{ex}}}_M(k)$ of $\mathcal{E}^{\mrm{ex}}_M(k)$ by adding to $\mathcal{E}^{\mrm{ex}}_M(k)$ the following predicates:% (so $\sim^{n}_{\mathcal{E}^{\sim}_M}$ is a $2n$-ary predicate):
	\begin{enumerate}[(4)]
		\item\label{theP_n_predicates} for $1 \leq n < \omega$, we define an $n+1$-ary predicate $P_n$ on $\mathbf{A}_k(M)$ as follows:
	$$(A, B_1, ..., B_n) \in P^{\mathcal{E}^{\mrm{ex}}_M(k)}_n \; \Leftrightarrow \; A \subseteq \mrm{acl}^{\mrm{g}}_M(B_1 \cup \cdots B_n).$$
	%\item a binary predicate which holds of $A, B \in \mathbf{A}_k(M)$exactly when $A \subseteq B$;
\end{enumerate}
\begin{enumerate}[(5)]
	\item\label{def_expanded_12+_item5} a ternary predicate which holds exactly when the following happens:
	$$K_1 \xrightarrow{p_1} K_2 \xrightarrow{p_2} K_3 = K_1 \xrightarrow{p_2 \circ p_1} K_3.$$
\end{enumerate}
%\begin{enumerate}[(6)]
%	\item a binary predicate which holds exactly when the following happens:
%	$$K_1 \xrightarrow{p_1} K_2 = K_2 \xrightarrow{p^{-1}} K_2;$$
%\end{enumerate}
\begin{enumerate}[(6)]
	\item a binary predicate which holds exactly when the following happens:
	$$K_1 \xrightarrow{p_1} K_2 \xrightarrow{p_2} K_1 = K_1 \xrightarrow{\mrm{id}_{K_1}} K_1.$$
\end{enumerate}
\begin{enumerate}[(7)]
	\item for every finite group $L$ such that $L \cong \mrm{Aut}(K)$ for some $K \in \mathbf{A}_k(M)$, we add a unary predicate $P_L$ which holds of $B \in \mathbf{A}_k(M)$ if and only if $\mrm{Aut}(B) \cong L$.
\end{enumerate}
When we write $\mathcal{E}^{\hat{\mrm{ex}}}_M$ (so without the $k$) we mean $\mathcal{E}^{\hat{\mrm{ex}}}_M(\omega)$.% (recall \ref{kM_notation}).
\end{definition}

	\begin{remark} If $k = \omega$, then clearly we could replace the predicates $P_n$ above with a single binary predicate which holds of $A, B \in \mathbf{A}(M)$ if and only if $A \subseteq B$.
\end{remark}

%	\begin{theorem}\label{the_first_crucial_lemma+} 
%	Let $M$ be as in \ref{hyp} and let $k_M \leq k \leq \omega$. Then every topological automorphism $\alpha$ of $\mrm{Aut}(M)$ induces an automorphism $f_{\alpha}$ of $\mathcal{E}^{\hat{\mrm{ex}}(k)}_M$, and the assignment $\alpha \mapsto f_{\alpha}$ is a group isomorphism from the group of topological automorphisms of $\mrm{Aut}(M)$ onto the automorphism group of the first-order structure $\mathcal{E}^{\hat{\mrm{ex}}(k)}_M$.
%\end{theorem}

The proof of the fact that $\mathcal{E}^{\hat{\mrm{ex}}}_M$ is a definitional expansion of $\mathcal{E}^{\hat{\mrm{ex}}}_M(\omega)$ is standard and we omit the details. We use the added structure \mbox{of $\mathcal{E}^{\hat{\mrm{ex}}}_M(\omega)$ to give a proof of \ref{second_theorem}.}

	\begin{proof}[Proof of Theorem~\ref{second_theorem}] First all, as $\mathcal{E}^{\hat{\mrm{ex}}}_M$ is a definitional expansion of $\mathcal{E}^{\mrm{ex}}_M$ we can use $\mathcal{E}^{\hat{\mrm{ex}}}_M$ and $\mathcal{E}^{\mrm{ex}}_M$ interchangeably. Item (1) was proved in \ref{the_first_crucial_lemma}. We prove (2). To this extent, we show that the assignment $\alpha \mapsto f_{\alpha}$ is a group isomorphism (recall that in (2) we are assuming that $M = N$). Now, the fact that the assignment $\alpha \mapsto f_{\alpha}$ is bijective is easy to see using the assignment $f \mapsto \alpha_f$ from the proof of \ref{the_first_crucial_lemma} and observing that the two maps are one the inverse of the other. We are thus left to show that for $\alpha$ and $\beta$ topological automorphisms of $\mrm{Aut}(M)$ we have that $f_{\beta} \circ f_{\alpha} = f_{\beta \circ \alpha}$, but this is obvious, in fact writing $\alpha(\tilde{p}) \restriction f_\alpha(K)$ the object from the proof of \ref{the_first_crucial_lemma} simply as $f_\alpha(p)$ (and similarly for $\beta$) we have that following:
$$(K, p, K') \xrightarrow{f_{\alpha}} (f_{\alpha}(K), f_{\alpha}(p), f_{\alpha}(K')) \xrightarrow{f_{\beta}} (f_\beta(f_{\alpha}(K)), f_\beta(f_{\alpha}(p)), f_\beta(f_{\alpha}(K'))),$$
and this is the same as:
$$(K, p, K') \xrightarrow{f_{\beta} \circ f_{\alpha}}(f_\beta \circ f_{\alpha}(K), f_\beta \circ f_{\alpha}(p), f_\beta \circ f_{\alpha}(K')),$$
essentially because for any $g \in G$ we obviously have that $\beta \circ \alpha(g) = \beta(\alpha(g))$.
\end{proof}

		The rest of this section is devoted to the application of Theorem~\ref{second_theorem} towards proofs of~\ref{vector_spaces}, \ref{no_alge}, \ref{rationals} and \ref{outer_graphs}. As it will clear from the proofs, the structure added in $\mathcal{E}^{\mrm{ex}}_M(k) \mapsto \mathcal{E}^{\hat{\mrm{ex}}}_M(k)$ is very useful when it comes to describing automorphisms.

	\begin{proof}[Proof of Theorem~\ref{vector_spaces}] Let $V$ be as in the assumptions of the theorem. It is well-known that $V$ is $\omega$-categorical and it has both weak elimination of imaginaries and the small index property (i.e., it has the strong small index property) \cite{evans}. In particular $\mrm{Aut}_\mrm{top}(\mrm{Aut}(V)) = \mrm{Aut}(\mrm{Aut}(V))$ and so, by Theorem~\ref{second_theorem} to understand $\mrm{Aut}(\mrm{GL}(V))$ it suffices to understand $\mrm{Aut}(\mathcal{E}^{\hat{\mrm{ex}}}_V(1)) =: G$, as clearly in this case we have that $k_V = 1$ (where $k_V$ is as in \ref{kM_notation}). Now, every automorphism $f$ of $\mathcal{E}^{\hat{\mrm{ex}}}_M(1)$ induces an automorphism $P(f)$ of the lattice $\mrm{P}(V)$, which we recall is the set of subspaces of $V$ ordered by inclusion, and the correspondence $f \mapsto P(f)$ is a homomorphism. Let $T$ be the kernel of this homomorphism. We claim that:
	\begin{enumerate}[(1)]
	\item $T \cong \mrm{Aut}(F^\times)$;
	%\item $G/K \cong \mrm{Aut}(\mrm{P}(V))$;
	\item the  sequence $1 \rightarrow T \rightarrow G \rightarrow \mrm{Aut}(\mrm{P}(V)) \rightarrow 1$ is a split exact sequence.
	\end{enumerate}
%First of all, we show that the sequence is split. 
We show (1). First of all, recall that the domain of $\mathcal{E}^{\hat{\mrm{ex}}}_V(1)$ is made of triples $(K, p, K')$ where $p: K \cong K'$ and $K$ is a subspace of dimension $0$ or $1$. 
%Let $|F| = p^n$ and given a copy of the field $F$, let $\phi_K$ be the Frobenius automorphism of $K$. 
Enumerate the subspaces of dimension $1$ of $V$ as $(K_i : i < \omega)$ and, for every $i < \omega$, choose an element $e_i \in K_i \setminus \{0_V\}$. Then any subspace of dimension $1$ has the form $\{ae_i : a \in F \} =: Fe_i$ for some $i < \omega$, and so any triple $(K, p, K')$ with $p: K \cong K'$ has the form $a e_i \mapsto \lambda_p a e_j$ for some $i, j < \omega$, where $\lambda_p \in F^\times$. Thus, for the rest of the proof we write arbitrary elements of $\mathcal{E}^{\hat{\mrm{ex}}}_V(1)$ as $(Fe_i, \lambda, Fe_j)$. 
%Now, given an automorphism $g \in \mrm{P}(V)$ we can define an automorphism of $\mathcal{E}^{\mrm{ex}^+}_M(1)$ as follows:
%\begin{enumerate}[(a)]
%	\item $(\{0 \}, \mrm{id}_{\{0\}}, \{0 \}) \mapsto (\{0 \}, \mrm{id}_{\{0\}}, \{0 \})$;
%	\item $(Fe_i, \lambda, Fe_j) \mapsto (g(Fe_i), \lambda, g(Fe_j))$, where $\lambda \in F^\times$.
%\end{enumerate}
We go back to the proof of (1), to this extent, let $f \in T$, then for every $i < \omega$, $f$ sends each triple of the form $(Fe_i, \lambda, Fe_i)$ into a triple of the form $(Fe_i, \lambda', Fe_i)$; so, letting $\lambda' = f_i(\lambda)$, for each $i < \omega$, we have a permutation $f_i$ of $F^\times$. We will show that:
	\begin{enumerate}[(i)]
	\item for every $i < \omega$, $f_i \in \mrm{Aut}(F^\times)$;
	\item for every $i, j < \omega$, $f_i = f_j$.
	\end{enumerate}
Clearly, (i), (ii) implies (1). The proof of (i) and (ii) is standard and we omit it.
%
%Concerning (i), let $i < \omega$ and $\lambda, \mu \in F^\times$, then: 
%$$Fe_i \xrightarrow{\lambda} Fe_i \xrightarrow{\mu} Fe_i = Fe_i \xrightarrow{\mu\lambda} Fe_i$$
%is mapped to:
%$$Fe_i \xrightarrow{f_i(\lambda)} Fe_i \xrightarrow{f_i(\mu)} Fe_i = Fe_i \xrightarrow{f_i(\mu\lambda)} Fe_i$$
%and so, as $f \in \mrm{Aut}(\mathcal{E}^{\hat{\mrm{ex}}}_V(1))$, we have that:
%$$f_i(\mu\lambda) = f_i(\lambda\mu) = f_i(\lambda)f_i(\mu).$$
%Similarly, for $i < \omega$ and $\lambda \in F^\times$ we have
%$$Fe_i \xrightarrow{\lambda} Fe_i \xrightarrow{\lambda^{-1}} Fe_i = F e_i \xrightarrow{1} F e_i$$
%is mapped to:
%$$Fe_i \xrightarrow{f_i(\lambda)} Fe_i \xrightarrow{f_i(\lambda^{-1})} Fe_i = F e_i \xrightarrow{1} F e_i$$
%and so, as $f \in \mrm{Aut}(\mathcal{E}^{\hat{\mrm{ex}}}_V(1))$, we have that:
%$$f_i(\lambda^{-1} \lambda) = 1 = f_i(\lambda \lambda^{-1}).$$
%Concerning (ii), notice that for $\lambda \in F^\times$ we obviously have that the following holds:
%$$Fe_i \xrightarrow{\lambda} Fe_i \xrightarrow{1} Fe_j \xrightarrow{\lambda^{-1}} Fe_j \xrightarrow{1} F e_i = F e_i \xrightarrow{1} F e_i,$$
%and so:
%$$Fe_i \xrightarrow{f_i(\lambda)} Fe_i \xrightarrow{1} Fe_j \xrightarrow{f_j(\lambda)^{-1}} Fe_j \xrightarrow{1} F e_i = F e_i \xrightarrow{1} F e_i,$$
%from which it follows that $f_i(\lambda) f_j(\lambda)^{-1} = 1$ and so $f_i(\lambda) = f_j(\lambda)$, as desired. This concludes the proof of (i), (ii), and thus the proof of (1). 
We are then left to show item (2). To this extent, we first introduce some notation. First of all, by the Fundamental Theorem of Projective Geometry for every $g \in \mrm{Aut}(P(V))$ there is $\hat{g} \in \mrm{Aut}(V)$ which induces $g$. Recall also that we denote with $\alpha \mapsto f_{\alpha}$ the correspondence  that we defined in the proof of \ref{the_first_crucial_lemma}. Now, to define the needed section of $G \rightarrow \mrm{Aut}(P(V))$ that shows short \mbox{exactness we operate as follows:}
	\[ \begin{array}{rcl}	
g \in \mrm{Aut}(P(V))
& \rightsquigarrow&  \hat{g} \in \mrm{Aut}(V)\\
& \rightsquigarrow &  \hat{g} (\cdot) \hat{g}^{-1} =: \alpha_g \in \mrm{Aut}(\mrm{Aut}(V))\\
& \rightsquigarrow &  f_{\alpha_g} \in \mrm{Aut}(\mathcal{E}^{\hat{\mrm{ex}}}_V(1)),
\end{array} \]	
where we denote by $\hat{g} (\cdot) \hat{g}^{-1}$ the inner automorphism induced by $\hat{g} \in \mrm{Aut}(V)$ on $\mrm{Aut}(\mrm{Aut}(V))$.
It should now be clear that the map $g \mapsto f_{\alpha_g}$ is as wanted.
\end{proof}

%\begin{proof}[Proof of \ref{boolean}]
%It is well-known that $\mathbb{B}$ has the strong small index property (cf. e.g. \cite[pg. 146]{hodges}) and that it satisfies the assumptions of \ref{useful_prop}. Now, recalling the definition of $E_n$ from \ref{def_expanded_12}(\ref{def_expanded_12_binary}), each $E_n$ is naturally an equivalence relation of arity $2n$ on tuples of length $n$ from $\mathbf{A}_1(\mathbb{B})$. Observe now that we have the following:
%	\begin{enumerate}[(a)]
%	\item there is a single $E_1$-equivalence class of elements from $\mathbf{A}_1(\mathbb{B})$, they are:
%	$$\{\{0, 1, a, \neg a \} : a \in \mathbb{B} \setminus \{0, 1\}\}.$$
%	\item there are $3$ $E_2$-equivalence classes of distinct elements in $\mathbf{A}_1(\mathbb{B})$, they are:
%	$$C_1 = \{(A_1, A_2) : A_i = \langle a_i \rangle, \; a_1 < a_2 \}, \;\; C_2 = \{(A_1, A_2) : A_i = \langle a_i \rangle, \; a_1 > a_2 \};$$
%	$$C_3 = \{(A_1, A_2) : A_i = \langle a_i \rangle, \text{ $a_1$ and $a_2$ are incomparable} \}.$$
%\end{enumerate}
%%By \ref{useful_prop+} if we show that each permutation of $\mathbf{A}_1(\mathbb{B})$ which preserves the relations $R_n$ and does not pre
%\end{proof}

	\begin{proof}[Proof of \ref{no_alge}] As in \cite[pg. 22]{coarse}, we denote by $\mathcal{E}_M$ the structure with domain $M$ which, for every $0 < n < \omega$, has a relation of arity $2n$ interpreted as the orbit equivalence relation for the action of $\mrm{Aut}(M)$ on $M^n$. If $M$ has no algebraicity, then clearly $k_M = 1$ and $\mathcal{E}_M$ is bi-interpretable \mbox{with $\mathcal{E}^{{\mrm{ex}}}_M(1)$, and so by \ref{second_theorem} we are done.}
\end{proof}

	\begin{proof}[Proof of \ref{free_relational}] This follows from \ref{no_alge} and \cite{ssip_canonical_hom}.
\end{proof}

	The following proposition will be useful in proving Corollaries~\ref{rationals} and \ref{outer_graphs}.

\begin{proposition}\label{useful_prop} Suppose that $M$ is as in \ref{hyp} and that it has no algebraicity. Let $\mrm{Aut}^{\circ}(\mathcal{E}_M)$ be the set of automorphisms of $\mathcal{E}_M$ which preserve the equivalence classes defined by the orbital equivalence relations. Then we have the following:
	\begin{enumerate}[(1)]
	\item $\mrm{Aut}^{\circ}(\mathcal{E}_M) = \mrm{Aut}(M)$;
	\item $\mrm{Aut}^{\circ}(\mathcal{E}_M) \trianglelefteq \mrm{Aut}(\mathcal{E}_M)$;
	\item the bijection $\alpha \mapsto f_{\alpha}$ sends $\mrm{Inn}(\mrm{Aut}(M))$ onto $\mrm{Aut}^{\circ}(\mathcal{E}_M)$ 
	\item $\mrm{Out}(\mrm{Aut}(M)) \cong \mrm{Aut}(\mathcal{E}_M)/\mrm{Aut}^{\circ}(\mathcal{E}_M)$;
	\item given the exact sequences below, the first splits if and only if the second does:
	$$1 \rightarrow \mrm{Inn}(\mrm{Aut}(M)) \rightarrow \mrm{Aut}(\mrm{Aut}(M)) \rightarrow \mrm{Aut}(\mrm{Aut}(M))/\mrm{Inn}(\mrm{Aut}(M)) \rightarrow 1$$
	$$1 \rightarrow \mrm{Aut}^{\circ}(\mathcal{E}_M) \rightarrow \mrm{Aut}(\mathcal{E}_M) \rightarrow \mrm{Aut}(\mathcal{E}_M)/\mrm{Aut}^{\circ}(\mathcal{E}_M) \rightarrow 1.$$
	\end{enumerate}
\end{proposition}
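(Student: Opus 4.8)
The plan is to establish the five items essentially in the order listed, as each builds on the previous one, with item (1) doing the real work. For (1), the inclusion $\mrm{Aut}(M) \subseteq \mrm{Aut}^{\circ}(\mathcal{E}_M)$ is by definition: an automorphism of $M$ preserves all orbit equivalence relations and obviously fixes each of them setwise. For the reverse inclusion, I would use the no-algebraicity assumption together with Theorem~\ref{second_theorem} (or directly the machinery behind \ref{the_first_crucial_lemma}): since $k_M = 1$, the structure $\mathcal{E}^{\mrm{ex}}_M(1)$ is essentially $M$ with its orbital relations together with the ``pointer'' data of the triples $(K,p,K')$ for $K,K'$ singletons, and an automorphism of $\mathcal{E}_M$ preserving the orbital classes extends canonically to an automorphism of $\mathcal{E}^{\mrm{ex}}_M(1)$ that fixes the unary predicate picking out $\mathbf{A}_1(M)$ and acts trivially on which class each point sits in. One then checks such a map is induced by conjugation by an element of $\mrm{Aut}(M)$; concretely, an $f \in \mrm{Aut}^\circ(\mathcal{E}_M)$ together with the compatibility imposed by the $E_n$ predicates forces, for each $n$-tuple $\bar a$, a well-defined image $\bar b$ in the same orbit, and these cohere into a single permutation of $M$ that is an automorphism — this is exactly the ``limit'' construction $\alpha_f$ from the proof of \ref{the_first_crucial_lemma} specialized to $k=1$, and preservation of orbital classes is what pins the value down uniquely.

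Given (1), items (2), (3), (4) follow by bookkeeping. For (2): $\mrm{Aut}^\circ(\mathcal{E}_M)$ is the kernel of the natural action of $\mrm{Aut}(\mathcal{E}_M)$ on the (finite or countable) set of orbital equivalence classes, hence normal. For (3): by Theorem~\ref{second_theorem}(2) the map $\alpha \mapsto f_\alpha$ is a group isomorphism $\mrm{Aut}_{\mrm{top}}(\mrm{Aut}(M)) \to \mrm{Aut}(\mathcal{E}^{\mrm{ex}}_M) \cong \mrm{Aut}(\mathcal{E}_M)$; an inner automorphism $\alpha = g(\cdot)g^{-1}$ for $g \in \mrm{Aut}(M)$ sends $G_{(a)}$ to $G_{(g(a))}$, so the induced $f_\alpha$ sends the triple associated to $a$ to the one associated to $g(a)$ — in particular $f_\alpha$ preserves orbits and their classes, so $f_\alpha \in \mrm{Aut}^\circ(\mathcal{E}_M)$; conversely by (1) every element of $\mrm{Aut}^\circ(\mathcal{E}_M)$ equals $g$ acting this way for a unique $g \in \mrm{Aut}(M)$, giving surjectivity of the restricted map. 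One should also note here that $\mrm{Inn}(\mrm{Aut}(M)) \cong \mrm{Aut}(M)$ because $\mrm{Aut}(M)$ is centerless for $M$ as in the hypothesis with no algebraicity (the center is trivial since distinct points lie in distinct singleton-orbits after naming enough parameters — if this needs justification I would add a line, otherwise cite that the natural action is faithful with no nontrivial central elements). Then (4) is immediate: quotienting the isomorphism $\mrm{Aut}_{\mrm{top}}(\mrm{Aut}(M)) \cong \mrm{Aut}(\mathcal{E}_M)$ by the corresponding normal subgroups from (3) gives $\mrm{Out}(\mrm{Aut}(M)) = \mrm{Aut}_{\mrm{top}}(\mrm{Aut}(M))/\mrm{Inn}(\mrm{Aut}(M)) \cong \mrm{Aut}(\mathcal{E}_M)/\mrm{Aut}^\circ(\mathcal{E}_M)$ (using that in the SIP-free statement we work with $\mrm{Aut}_{\mrm{top}}$, or with $\mrm{Aut}$ outright when SIP holds). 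Finally (5) follows because a group isomorphism carrying one short exact sequence onto another (which is precisely what (3) together with \ref{second_theorem}(2) provides) carries splittings to splittings in both directions: a section of the second sequence composed with the inverse isomorphism is a section of the first, and vice versa.

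The main obstacle is item (1), specifically the reverse inclusion $\mrm{Aut}^\circ(\mathcal{E}_M) \subseteq \mrm{Aut}(M)$. The subtlety is that a priori an automorphism of $\mathcal{E}_M$ is just a permutation of $M$ respecting the $2n$-ary orbital relations, and one must rule out ``wild'' such permutations that nonetheless scramble points within a single orbit in an incoherent way; the extra hypothesis that the orbital classes are preserved, combined with the $\omega$-categorical homogeneity / the conditions in Hypothesis~\ref{hyp}, is what forbids this, but the argument must invoke the coherence packaged in the $E_n$ relations of $\mathcal{E}^{\mrm{ex}}_M(1)$ rather than just the naked orbital structure of $\mathcal{E}_M$. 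I would therefore be careful to phrase this step as: an $f \in \mrm{Aut}^\circ(\mathcal{E}_M)$ canonically induces an automorphism of $\mathcal{E}^{\mrm{ex}}_M(1)$ (using that $\mathcal{E}_M$ is bi-interpretable with $\mathcal{E}^{\mrm{ex}}_M(1)$ when $k_M = 1$, as noted in the proof of \ref{no_alge}), and then apply the $k=1$ case of the $\alpha_f$ construction from \ref{the_first_crucial_lemma}, whose output is by construction an element of $\mrm{Aut}(M) = \mrm{Inn}(\mrm{Aut}(M))$ realizing $f$.
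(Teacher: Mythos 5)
Your treatment of items (2)--(5) is correct and essentially the straightforward bookkeeping one expects: (2) because $\mrm{Aut}^\circ(\mathcal{E}_M)$ is a kernel, (3) and (4) by transporting the inner/outer decomposition across the isomorphism of Theorem~\ref{second_theorem}(2), and (5) because any isomorphism of groups carrying one short exact sequence to the other carries sections to sections.

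The genuine gap is in item (1), which you correctly identify as the load-bearing step, but whose reverse inclusion $\mrm{Aut}^\circ(\mathcal{E}_M)\subseteq\mrm{Aut}(M)$ you try to deduce from the $\alpha_f$ construction of Theorem~\ref{the_first_crucial_lemma}. That construction, applied to a lift $\tilde f\in\mrm{Aut}(\mathcal{E}^{\mrm{ex}}_M(1))$ of $f$, produces a \emph{topological automorphism} $\alpha_{\tilde f}\in\mrm{Aut}_{\mrm{top}}(\mrm{Aut}(M))$ — concretely it shows that $g\mapsto fgf^{-1}$ maps $\mrm{Aut}(M)$ to itself, i.e.\ that $f$ \emph{normalizes} $\mrm{Aut}(M)$ in $\mrm{Sym}(M)$. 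But this already holds for arbitrary $f\in\mrm{Aut}(\mathcal{E}_M)$, not merely those in $\mrm{Aut}^\circ(\mathcal{E}_M)$, so it cannot distinguish $\mrm{Aut}^\circ(\mathcal{E}_M)$ from $\mrm{Aut}(\mathcal{E}_M)$ and does not by itself give $f\in\mrm{Aut}(M)$. The claim that the output ``is by construction an element of $\mrm{Aut}(M)=\mrm{Inn}(\mrm{Aut}(M))$ realizing $f$'' conflates elements of $\mrm{Aut}(M)$ with automorphisms \emph{of} $\mrm{Aut}(M)$, and trying to patch it by arguing $\alpha_{\tilde f}$ is inner would be circular, since that is essentially what (3) asserts.

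The intended argument for (1) (which the paper dismisses as obvious) is much more direct and bypasses Theorem~\ref{the_first_crucial_lemma} entirely. If $f\in\mrm{Aut}^\circ(\mathcal{E}_M)$, then for every finite tuple $\bar a$ the tuples $\bar a$ and $f(\bar a)$ lie in the same $E_n$-class, i.e.\ there is some $g\in\mrm{Aut}(M)$ with $g\restriction\bar a=f\restriction\bar a$. Thus $f$ lies in the closure of $\mrm{Aut}(M)$ inside $\mrm{Sym}(M)$; since automorphism groups of first-order structures are closed, $f\in\mrm{Aut}(M)$. The forward inclusion is, as you say, immediate. Once you replace the middle of your item (1) argument with this closedness observation, the rest of your proposal goes through.
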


	\begin{proof} This is obvious.
\end{proof}

	\begin{proof}[Proof of \ref{rationals}] It is well-known that $\mathbb{Q}$ has the strong small index property (cf. e.g. \cite[pg. 146]{hodges}). Also, in this case we have no algebraicity and so we can use \ref{useful_prop}. We will use \ref{useful_prop} freely, i.e., without explicitly referring to where exactly it is used. Let $E_n$ be the orbital equivalence relation of arity $2n$. Observe that we have:
	\begin{enumerate}[(a)]
	\item there is a single $E_1$-equivalence class;
	\item there are two $E_2$-equivalence classes of pairs of distinct elements:
	 $$C_1 = \{(a_1, a_2) : a_1 < a_2 \in \mathbb{Q} \} \text{ and } C_2 = \{(a_1,  a_2) : a_1 > a_2  \in \mathbb{Q} \}.$$
%	 \item there are six $E_3$-equivalence classes of triples of distinct elements:
%	 $$\{(a_1 < a_2 < a_3) : a_i \in \mathbb{Q} \}, \; \{(a_1 < a_3 < a_2) : a_i \in \mathbb{Q} \} \text{ and } \{(a_2 < a_1 < a_3) : a_i \in \mathbb{Q} \};$$
%	 $$\{(a_3 < a_2 < a_1) : a_i \in \mathbb{Q} \}, \; \{(a_2 < a_3 < a_1) : a_i \in \mathbb{Q} \} \text{ and } \{(a_3 < a_1 < a_2) : a_i \in \mathbb{Q} \};$$
\end{enumerate} 
Let $M = \mathbb{Q}$. Every automorphism of $\mathcal{E}_M$ induces a permutation of the set $\{ C_1, C_2\}$. Consider:
$$1 \rightarrow \mrm{Aut}^{\circ}(\mathcal{E}_M) \rightarrow \mrm{Aut}(\mathcal{E}_M) \rightarrow \mrm{Sym}(2) \rightarrow 1.$$
We will show that this sequence is exact and that it splits, this will give us what we want. Thus, we have to show that the map $\mrm{Aut}(\mathcal{E}_M) \rightarrow \mrm{Sym}(2)$ has a section. To see this, we map the unique element of $\mrm{Sym}(2)$ of order $2$ to the permutation $f_s$ of $\mathbb{Q}$ which sends $q$ to $-q$. It is easy to see that the map $f_s$ and is as wanted.
%Then any permutation of $\mathbb{Q}$ which switches the two $E_2$-equivalence classes is s.t.:
%	$$\{(a_1 < a_2 < a_3) : a_i \in \mathbb{Q} \} \;\; \leftrightarrow \;\; \{(a_3 < a_2 < a_1) : a_i \in \mathbb{Q} \}$$
%	$$\{(a_1 < a_3 < a_2) : a_i \in \mathbb{Q} \} \;\; \leftrightarrow \;\; \{(a_2 < a_3 < a_1) : a_i \in \mathbb{Q} \} $$
%	$$\{(a_2 < a_1 < a_3) : a_i \in \mathbb{Q} \} \;\; \leftrightarrow \;\; \{(a_3 < a_1 < a_2) : a_i \in \mathbb{Q} \} $$
%Hence, any such map preserves the equivalence relation $E_3$ and arguing similarly one sees that it also preserves the $E_n$-equivalence relations, for all $3 \leq n < \omega$, and so it is an automorphism of $\mathcal{E}_M$. On the other hand, any map which preserves the $E_2$-equivalence classes must be an automorphism. It follows easily that $\mrm{Aut}(\mathcal{E}_M)/\mrm{Aut}^{\circ}(\mathcal{E}_M)$ is the cyclic group of order $2$, and so by \ref{useful_prop} we are done.
\end{proof}

	\begin{proof}[Proof of \ref{outer_graphs}] It is well-known that all the structures mentioned in the corollary have the strong small index property (cf. e.g. \cite{ssip_canonical_hom} and \cite[pg. 146]{hodges}) and that they satisfy the assumptions of \ref{useful_prop}, so the point is understanding $\mrm{Aut}(\mathcal{E}_M)/\mrm{Aut}^{\circ}(\mathcal{E}_M)$. 

\smallskip \noindent
\underline{The $K_m$-free random graph $R(K_m)$}. Let $E_n$ be the orbital equivalence relation of arity $2n$. Observe that:
	\begin{enumerate}[(a)]
	\item there is a single $E_1$-equivalence class;
	\item there are two $E_2$-equivalence classes of pairs of distinct elements:
	 $$\{(a_1 R a_2) : a_i \in R(K_m) \} \text{ and } \{(a_1 \neg R a_2) : a_i \in R(K_m) \}.$$
\end{enumerate}
Now, a permutation of $R(K_m)$ is an automorphism of $R(K_m)$ if and only if it preserves the two $E_2$-equivalence classes, so suppose that there is a permutation of $R(K_m)$ which switched the two $E_2$-equivalence classes, then it must send the $E_m$-equivalence class of tuples size of size $m$ such that any two pairs are not adjacent in the graph into a clique of size $m$, but by hypothesis the graph $R(K_m)$ is $K_m$-free, a contradiction. Hence, letting $M = R(K_m)$, we have that $\mrm{Aut}(\mathcal{E}_M)/\mrm{Aut}^{\circ}(\mathcal{E}_M)$ is trivial, and so by \ref{useful_prop} we are done.

\smallskip \noindent
\underline{The $m$-colored random graph $R_m$}. For $i \in [1, m]$, let $C_i$ be the binary predicate corresponding to edges of color $i$. Let $E_n$ be the orbital equivalence relation of arity $2n$. Observe that:
	\begin{enumerate}[(a)]
	\item there is a single $E_1$-equivalence class;
	\item there are $m$ $E_2$-equivalence classes of pairs of distinct elements:
	 $$\{(a_1 C_1 a_2) : a_i \in R_m) \}, ..., \{(a_1 C_m a_2) : a_i \in R_m \}.$$
\end{enumerate}
As in previous cases, a permutation of $R_m$ is an automorphism of $R_m$ if and only if it preserves the $E_2$-equivalence classes. On the other hand, for any permutation $\sigma$ of the $m$-many $E_2$-equivalence classes there is a permutation of $R_m$ which induces $\sigma$ and  also preserves the other equivalence relations $E_n$, for $n \geq 3$, in fact taking $R_m$ and changing the color of edges according to $\sigma$ results into a graph isomorphic to $R_m$. Using this and letting $M = R_m$, it is easy to see that $\mrm{Aut}(\mathcal{E}_M)/\mrm{Aut}^{\circ}(\mathcal{E}_M) \cong \mrm{Sym}(n)$, and so by \ref{useful_prop} we are done.
\end{proof}

\end{document}